\documentclass[a4paper,12pt]{amsart}

\pagestyle{headings}
\usepackage{fullpage}

\oddsidemargin=0cm \evensidemargin=0cm%
\textwidth=450pt

\setlength{\footskip}{20pt}

\makeindex

\usepackage{xcolor}
\usepackage[
colorlinks=true,
citecolor=blue,
urlcolor=blue,
linkcolor=blue,
pdfborder={0 0 0},
breaklinks
]{hyperref}

\setcounter{tocdepth}{1}
\usepackage{amsfonts,graphics,amsmath,amsthm,amscd,amssymb,amsmath,latexsym,euscript, enumerate,kotex,mathtools}
\usepackage{epsfig,url}
\usepackage{flafter}
\usepackage[all,cmtip,line]{xy}
\usepackage{array}
\usepackage[english]{babel}
\usepackage{overpic}
\usepackage{subfig}
\usepackage{multirow}
\usepackage{tikz-cd}
\usepackage{theoremref}
\usepackage{microtype}
\usepackage{wrapfig}
\usepackage[shortlabels]{enumitem}
\setlist[enumerate, 1]{1\textsuperscript{o}}

\newtheorem{theorem}{Theorem}[section]
\newtheorem{lemma}[theorem]{Lemma}
\newtheorem{proposition}[theorem]{Proposition}
\newtheorem{conjecture}[theorem]{Conjecture}
\newtheorem{question}[theorem]{Question}
\newtheorem{corollary}[theorem]{Corollary}

\theoremstyle{definition} 
\newtheorem{definition}[theorem]{Definition}
\newtheorem{definition-lemma}[theorem]{Definition-Lemma}

\theoremstyle{remark}
\newtheorem{remark}[theorem]{Remark}

\newtheorem{notation}[theorem]{Notation}
\numberwithin{equation}{section}

\newcommand{\R}{\mathbb{R}}
\newcommand{\Z}{\mathbb{Z}}

\newcommand{\Q}{\mathbb{Q}}

\def\P{\mathbb{P}}

\DeclareRobustCommand{\O}{\mathcal{O}}

\DeclareMathOperator{\ord}{ord}



\def\Spec{\operatorname{Spec}}

\newcommand{\floor}[1]{\left\lfloor #1 \right\rfloor}

\let\oldframe\frame
\renewcommand\frame[1][allowframebreaks]{\oldframe[#1]}



\title[MMP on the generic fiber of log Calabi--Yau type fibration]{Minimal Model Program on the generic fiber \\of log Calabi--Yau type fibration}

\date{\today}
\subjclass[2010]{14E05, 14E30}
\keywords{Minimal model program, log Calabi--Yau type, generic invariance}

\begin{document}

\author[D.~Kim]{Donghyeon Kim}
\author[D.-W. Lee]{Dae-Won Lee}
\address[Donghyeon Kim]{Department of Mathematics, Yonsei University, 50 Yonsei-ro, Seodaemun-gu, Seoul 03722, Republic of Korea}
\email{narimial0@gmail.com}
\address[Dae-Won Lee]{Department of Mathematics, Ewha Womans University, 52 Ewhayeodae-gil, Seodaemun-gu, Seoul 03760, Republic of Korea}
\email{daewonlee@ewha.ac.kr}

\begin{abstract}
We study the minimal model program on the geometric generic fiber of a fibration $f:X\to S$ such that for a Zariski dense subset $S'\subseteq S$, $X_s$ is an $\varepsilon$-lc log Calabi--Yau type for every $s\in S'$. We prove that for a fibration $f:X\to S$ of varieties, if the fibers are of $\varepsilon$-lc log Calabi--Yau type, then the geometric generic fiber $X_{\overline{\eta}}$ is pklt. In particular, for any big divisor $D$ on $X_{\overline{\eta}}$, we can run the anticanonical MMP and $D$-MMP with scaling of an ample divisor on $X_{\overline{\eta}}$.
\end{abstract}

\maketitle
\allowdisplaybreaks

\section{Introduction}

The minimal model program (MMP) provides a framework for studying the birational geometry of higher dimensional varieties. Controlling the singularities of fibers in a family is an important problem in birational geometry, so that one can run the MMP uniformly and understand the variation of Fano and Calabi--Yau type varieties. In this paper, we focus on fibrations whose closed fibers are of (log) Calabi--Yau type over a Zariski dense subset of the base, and we study the extent to which the geometric generic fiber inherits strong MMP properties from the closed fibers.

\smallskip

Let us briefly recall the definition of log Calabi--Yau type. Let $(X,\Delta)$ be a projective pair and $\varepsilon>0$. We say that $(X,\Delta)$ is of klt (resp. $\varepsilon$-lc)\emph{ log Calabi--Yau type} if there exists an effective $\Q$-Weil divisor $\Delta'\ge 0$ on $X$ such that
$$
(X,\Delta+\Delta') \text{ is klt } (\text{resp. }\varepsilon\text{-lc) and } K_X+\Delta+\Delta' \sim_{\Q} 0.
$$

The Calabi--Yau type pairs naturally arise in birational geometry and behave nicely from the viewpoint of the MMP. Indeed, if $(X,\Delta)$ is a klt log Calabi--Yau type pair, then by definition, there exists an effective $\Q$-divisor $\Delta'$ on $X$ such that $K_X+\Delta+\Delta'\sim_{\Q} 0$. For any effective divisor $D$ on $X$, the pair $(X,\Delta+\Delta'+\varepsilon D)$ is also klt for sufficiently small $\varepsilon>0$. Thus, one can run a $D$-MMP with scaling of an ample divisor. Moreover, if $D$ is big, then this MMP terminates.

\smallskip

Motivated by this observation, we formulate the following question regarding the behavior of log Calabi–Yau type singularities in families.

\begin{question}\label{que}
Let $\varepsilon>0$, $f\colon X\to S$ be a fibration of varieties, and $(X,\Delta)$ a pair. Suppose that there exists a Zariski dense subset $S'\subseteq S$ such that for each $s\in S'$, the fiber $(X_s,\Delta_s)$ is of $\varepsilon$-lc log Calabi--Yau type. Is the geometric generic fiber $(X_{\overline{\eta}},\Delta_{\overline{\eta}})$ of klt log Calabi--Yau type?
\end{question}

\smallskip

A complete answer to Question \ref{que} seems to be subtle. Our first main result gives a weaker conclusion: we show that the geometric generic fiber is potentially klt (pklt for short). For the definition of pklt pairs, see Definition \ref{def:pklt}. We note that if $S'\subseteq S$ is a Zariski dense \emph{open} subset and the base field $k$ is algebraically closed and of infinite transcendence degree over the prime field of any characteristic, then by \cite[Lemma 2.1]{Vial13}, Question \ref{que} has an affirmative answer. However, our question concerns a more general setting. 

\begin{theorem}\label{thm:pklt}
Let $\varepsilon>0$, $f:X\to S$ be a fibration of normal varieties, $(X,\Delta)$ a pair, and let $S'\subseteq S$ be a Zariski dense subset such that $(X_s,\Delta_s)$ is of $\varepsilon$-lc log Calabi--Yau type for each $s\in S'$. Then the geometric generic fiber $(X_{\overline{\eta}},\Delta_{\overline{\eta}})$ is pklt.
\end{theorem}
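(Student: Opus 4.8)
The plan is to reduce the statement to a \emph{generic invariance of log discrepancies} over the base, and then to transport the resulting singularity bound to the geometric generic fiber. By Definition \ref{def:pklt} it suffices to produce an effective $\Q$-divisor $\Gamma$ on $X_{\overline{\eta}}$ with $(X_{\overline{\eta}},\Delta_{\overline{\eta}}+\Gamma)$ klt (possibly $\Gamma=0$). First I would shrink $S$ to a Zariski dense open $U\subseteq S$ over which $f$ is flat with normal fibers, over which a fixed log resolution of $(X,\Delta)$ restricts to a log resolution of $(X_s,\Delta_s)$ for every closed point, and over which $(K_X+\Delta)|_{X_s}=K_{X_s}+\Delta_s$ (via generic flatness and standard spreading-out). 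Since $S'\cap U$ is again dense in $U$, I may replace $S$ by $U$ and $S'$ by $S'\cap U$.

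The next, essentially free, observation is that the hypothesis already controls $(X_s,\Delta_s)$ \emph{without} the auxiliary divisor. For $s\in S'$ choose $\Delta'_s\ge 0$ with $(X_s,\Delta_s+\Delta'_s)$ being $\varepsilon$-lc. For every divisor $E$ over $X_s$ one has $a(E;X_s,\Delta_s)\ge a(E;X_s,\Delta_s+\Delta'_s)\ge\varepsilon$ (log discrepancies only rise when the effective divisor $\Delta'_s$ is removed); hence $(X_s,\Delta_s)$ is itself $\varepsilon$-lc, and in particular the coefficients of $\Delta_s$ are $\le 1-\varepsilon$.

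The heart of the argument is to promote this to the generic fiber: I claim $(X_\eta,\Delta_\eta)$ is $\varepsilon$-lc. Fix any divisorial valuation $E$ over $X_\eta$. Realizing $E$ on a birational model and spreading it out over a dense open $U_E\subseteq U$, the relative canonical divisor and the pullback of $\Delta$ commute with restriction to a general closed fiber, so $a(E_s;X_s,\Delta_s)=a(E;X_\eta,\Delta_\eta)$ for all $s\in U_E$. Because $S'$ is dense it meets the nonempty open set $U_E$; choosing $s_0\in S'\cap U_E$ and invoking the previous paragraph gives $a(E;X_\eta,\Delta_\eta)=a(E_{s_0};X_{s_0},\Delta_{s_0})\ge\varepsilon$. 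As $E$ was arbitrary, $(X_\eta,\Delta_\eta)$ is $\varepsilon$-lc, hence klt. It is worth emphasizing that this is precisely where the \emph{uniformity} of $\varepsilon$ and the mere \emph{density} (rather than openness) of $S'$ enter: a single point of $S'$ in $U_E$ suffices, and the bound it yields is independent of $E$.

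Finally I would pass from $X_\eta$ to $X_{\overline{\eta}}$. In characteristic zero $k(S)$ is perfect, so normality and all log discrepancies are preserved under the separable extension $k(S)\hookrightarrow\overline{k(S)}$; thus $(X_{\overline{\eta}},\Delta_{\overline{\eta}})$ is again $\varepsilon$-lc and in particular pklt, with $\Gamma=0$. The main obstacle—and the place where the full log Calabi--Yau type hypothesis is needed beyond the discrepancy bound above—is the behavior under the possibly inseparable part of $\overline{k(S)}/k(S)$ in positive characteristic, where both normality and klt-ness of $X_\eta$ may fail to descend to $X_{\overline{\eta}}$. To cope with this I would use the effective anticanonical divisor furnished by the Calabi--Yau structure: for $s\in S'$ one has $-(K_{X_s}+\Delta_s)\sim_{\Q}\Delta'_s\ge 0$, and fixing $m_0$ with $m_0(-(K_X+\Delta))$ Cartier over $U$, upper semicontinuity of $h^0$ in the flat family shows that each $Z_j:=\{\,s:\ h^0(X_s,\O_{X_s}(-jm_0(K_{X_s}+\Delta_s)))\ge 1\,\}$ is closed; the sets $Z_{j!}$ form an ascending chain, which stabilizes by Noetherianity to a closed set containing the dense locus $\bigcup_j Z_j\supseteq S'$, hence containing $\eta$. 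This yields a single $M$ with $-M(K_{X_\eta}+\Delta_\eta)$ effective, an effectivity that persists after base change since $h^0$ only grows. Combining this surviving anticanonical divisor with the $\varepsilon$-lc bound is what one leverages to certify that $(X_{\overline{\eta}},\Delta_{\overline{\eta}})$ remains of klt type; making this descent unconditional in arbitrary characteristic is the principal technical difficulty, and is exactly the gap separating Theorem \ref{thm:pklt} from the stronger Question \ref{que}.
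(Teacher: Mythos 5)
Your proposal has a genuine gap, and it originates in the very first sentence: Definition \ref{def:pklt} does \emph{not} say that $(X,\Delta)$ is pklt when there exists $\Gamma\ge 0$ with $(X,\Delta+\Gamma)$ klt. It says that $-(K_X+\Delta)$ is pseudoeffective and that there is a uniform $\varepsilon>0$ with $A_{X,\Delta}(E)-\sigma_E(-(K_X+\Delta))>\varepsilon$ for \emph{every} prime divisor $E$ over $X$, where $\sigma_E$ is Nakayama's asymptotic order of the anticanonical class. With your reduction, taking $\Gamma=0$, any klt pair whatsoever would be pklt, which is false. Consequently the heart of your argument --- showing that $(X_\eta,\Delta_\eta)$ is $\varepsilon$-lc by spreading out a divisorial valuation and hitting $S'$ (which is fine, and is essentially Proposition \ref{disc 1}) --- establishes only the log discrepancy half of the required inequality. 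You never bound $\sigma_E(-(K_{X_{\overline{\eta}}}+\Delta_{\overline{\eta}}))$, and this is exactly where all the work in the paper lies: one must transfer the bound $\sigma_{E_s}(-(K_{X_s}+\Delta_s))\le \ord_{E_s}\Delta'_s$, available on closed fibers because the complement $\Delta'_s$ lives there, up to the geometric generic fiber. The paper does this by perturbing with $\varepsilon' A$, using Jiao's discreteness of volumes on $\varepsilon$-lc log Calabi--Yau pairs (Theorem \ref{thm:Jiao2}) together with the infimum description of the generic volume (Theorem \ref{thm:Jiao1}) to get $\vol(D_s)=\vol(D_{\overline{\eta}})$ on a dense subset, and then the Fulger--Koll\'ar--Lehmann volume criterion (Lemma \ref{volume asymptotic order}) to convert the volume equality into $\sigma_{E_s}(D_s)\ge\sigma_E(D_{\overline{\eta}})$. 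None of this machinery, or any substitute for it, appears in your proposal.

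Your final paragraph does not repair this. The Noetherian argument with the closed sets $Z_{j!}$ plausibly produces a single effective divisor $N\sim_{\Q}-M(K_{X_\eta}+\Delta_\eta)$, which gives pseudoeffectivity and the crude bound $\sigma_E(-(K+\Delta))\le\frac{1}{M}\ord_E N$; but you have no control on the singularities of $(X_\eta,\Delta_\eta+\frac{1}{M}N)$, so this yields no uniform lower bound on $A_{X_{\overline\eta},\Delta_{\overline\eta}}(E)-\sigma_E(-(K_{X_{\overline\eta}}+\Delta_{\overline\eta}))$. (If producing some effective anticanonical representative were enough, Question \ref{que} itself would be easy and the paper would not need to settle for the weaker pklt conclusion.) Finally, the difficulty you identify at the end --- inseparability of $\overline{k(S)}/k(S)$ in positive characteristic --- is not the relevant obstacle: the paper works over a field of characteristic $0$ throughout, so $k(S)\hookrightarrow\overline{k(S)}$ is harmless, and the genuine obstruction is the one described above, namely controlling the asymptotic order on the geometric generic fiber.
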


\smallskip

The pklt condition is strong enough to run the anticanonical MMP by \cite{CJKL25} and, more generally, a $D$-MMP with scaling of an ample divisor on the geometric generic fiber. Since every klt log Calabi--Yau type pair is pklt, the conclusion of our theorem still suffices to highlight many features of the MMP on the geometric generic fiber. As a consequence of Theorem \ref{thm:pklt} and Theorem \ref{thm:pklt MMP}, we obtain the following corollary.

\begin{corollary}\label{cor:MMP}
Under the assumptions of Theorem \ref{thm:pklt}, the following holds:
\begin{itemize}
    \item[\emph{(a)}] there exists a $-(K_{X_{\overline{\eta}}}+\Delta_{\overline{\eta}})$-MMP with scaling of an ample divisor, and
    \item[\emph{(b)}] for any big $\Q$-Cartier divisor $D$ on $X_{\overline{\eta}}$, there exists a $D$-MMP with scaling of an ample divisor that terminates.
\end{itemize}
\end{corollary}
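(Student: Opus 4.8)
The plan is to obtain the corollary as a direct application of the two results it cites, so the argument splits into one reduction step and two invocations. First, Theorem~\ref{thm:pklt} guarantees that under the stated hypotheses the geometric generic fiber $(X_{\overline{\eta}},\Delta_{\overline{\eta}})$ is pklt; unwinding Definition~\ref{def:pklt}, this furnishes an effective $\Q$-divisor $\Delta'\ge 0$ with $(X_{\overline{\eta}},\Delta_{\overline{\eta}}+\Delta')$ klt. Since $X_{\overline{\eta}}$ is projective over the algebraically closed field $\overline{\kappa(\eta)}$, the standard MMP apparatus (cone and contraction theorems, existence of flips, and---after a small $\Q$-factorialization, which exists for klt-type varieties---$\Q$-factoriality) is available as required by the cited theorems, and it is this klt witness $\Delta'$ that controls singularities along the program. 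With this in hand, both conclusions are exactly the outputs of Theorem~\ref{thm:pklt MMP} applied to $(X_{\overline{\eta}},\Delta_{\overline{\eta}})$, so the remaining task is only to verify that Theorem~\ref{thm:pklt} produces a pair meeting that theorem's hypotheses.

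For part (a) I would quote the anticanonical MMP for pklt pairs established in \cite{CJKL25} and repackaged as Theorem~\ref{thm:pklt MMP}: the $-(K_{X_{\overline{\eta}}}+\Delta_{\overline{\eta}})$-MMP with scaling of an ample divisor can be run even though $-(K_{X_{\overline{\eta}}}+\Delta_{\overline{\eta}})$ need not be effective, precisely because the pklt structure is what makes each contraction and flip legitimate. For part (b), the governing mechanism is the one sketched in the introduction for the log Calabi--Yau case, adapted to the pklt setting: a big $\Q$-Cartier divisor $D$ is $\Q$-linearly equivalent to an effective divisor, one perturbs the klt boundary by a small multiple to keep the pair klt, runs the $D$-MMP with scaling of an ample divisor, and deduces termination from the bigness of $D$ via the minimal model existence results of Birkar--Cascini--Hacon--McKernan.

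The step I expect to be the genuine obstacle is not the corollary itself but the part of Theorem~\ref{thm:pklt MMP} underlying (b). In the log Calabi--Yau case the relation $K_{X_{\overline{\eta}}}+\Delta_{\overline{\eta}}+\Delta'\sim_{\Q}0$ makes $K_{X_{\overline{\eta}}}+\Delta_{\overline{\eta}}+\Delta'+\varepsilon D\sim_{\Q}\varepsilon D$, so the $D$-MMP literally coincides with an adjoint MMP of a klt pair and termination is immediate from bigness. In the pklt case this Calabi--Yau relation is unavailable, so realizing the $D$-MMP through the cone theorem---that is, certifying that the $D$-negative extremal rays which arise are contractible for a klt pair and that the program terminates---requires the extra input of Theorem~\ref{thm:pklt MMP} rather than the naive perturbation. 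Once that theorem is in force, assembling (a) and (b) for $(X_{\overline{\eta}},\Delta_{\overline{\eta}})$ from the pklt conclusion of Theorem~\ref{thm:pklt} is immediate.
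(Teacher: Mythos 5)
Your proof is correct and takes exactly the paper's route: Theorem \ref{thm:pklt} shows $(X_{\overline{\eta}},\Delta_{\overline{\eta}})$ is pklt, and both parts of the corollary are then immediate from Theorem \ref{thm:pklt MMP}. One small caveat: your claim that ``unwinding Definition \ref{def:pklt}'' directly furnishes an effective $\Delta'$ with $(X_{\overline{\eta}},\Delta_{\overline{\eta}}+\Delta')$ klt is not accurate --- the pklt condition is purely valuative, and producing such a klt witness requires Theorem \ref{lct} together with the lemma from \cite{Xu25} (this is done inside the proof of Theorem \ref{thm:pklt MMP}(b), not in the definition); since your argument for the corollary only uses pklt as input to Theorem \ref{thm:pklt MMP}, this slip does not affect the conclusion.
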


We note that determining whether a pair $(X,\Delta)$ is pklt involves computing asymptotic invariants for every prime divisor $E$ over $X$, which is often intractable in practice. Theorem \ref{thm:pklt} provides an effective way to bypass this difficulty. It demonstrates that the log Calabi--Yau structure of fibers over a Zariski dense subset is a sufficient condition for the geometric generic fiber to be pklt. This allows us to deduce the birational properties of the generic fiber, including the existence of the MMP, from the geometry of the fibration.

\smallskip

We recall a conjecture due to Jiao, which predicts that for a klt Calabi--Yau type variety $X$, there exists a positive integer $m$ such that $m(K_X+\Delta)\sim 0$ for some effective divisor $\Delta$ on $X$ under a boundedness assumption.
\begin{conjecture}[{cf. \cite[Section 3]{Jia25b}}] \label{conj2}
There exists a positive integer $m$ with the following property. Let $\varepsilon,d,v>0$, $X$ be a $d$-dimensional $\varepsilon$-lc Calabi--Yau type variety, and $A$ an ample divisor on $X$ such that $A^d\le v$. Then there exists an effective $\Q$-Weil divisor $\Delta$ on $X$ such that $(X,\Delta)$ is klt and $m(K_X+\Delta)\sim 0$.
\end{conjecture}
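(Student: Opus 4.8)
The klt requirement is in fact automatic here: since $\varepsilon>0$, every $\varepsilon$-lc pair is klt, and the definition of $\varepsilon$-lc Calabi--Yau type already furnishes an effective $\Q$-Weil divisor $\Delta_0\ge 0$ with $(X,\Delta_0)$ being $\varepsilon$-lc and $K_X+\Delta_0\sim_{\Q}0$. Thus the whole content of the statement is the \emph{uniformity} of the index: one must produce a single integer $m=m(\varepsilon,d,v)$, together with a boundary $\Delta$ (possibly different from $\Delta_0$), for which the $\Q$-linear triviality $K_X+\Delta\sim_{\Q}0$ is upgraded to the integral relation $m(K_X+\Delta)\sim 0$. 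The plan is to reduce this to a boundedness statement for the underlying varieties and then to extract the uniform index by a spreading-out and Noetherian induction argument. I emphasize that the pklt results of Theorem \ref{thm:pklt} are not the relevant tool; the decisive inputs are external boundedness theorems.

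The first and central step is to show that, as $X$ ranges over all $d$-dimensional $\varepsilon$-lc Calabi--Yau type varieties admitting an ample divisor $A$ with $A^d\le v$, the polarized pairs $(X,A)$ form a bounded family. The strategy is to bound the relevant intersection numbers and the Cartier index so that a uniform multiple of $A$ gives an embedding into a fixed projective space with image of bounded degree. Since $K_X\equiv_{\Q}-\Delta_0$ with $\Delta_0\ge 0$, one has $K_X\cdot A^{d-1}\le 0$; the reverse bound amounts to controlling $\Delta_0\cdot A^{d-1}$, i.e.\ the ``degree'' of the Calabi--Yau boundary. To make this bound uniform I would first replace $\Delta_0$ by a boundary with controlled coefficients, applying Birkar's boundedness of complements to the numerically trivial anticanonical class $-K_X\sim_{\Q}\Delta_0\ge 0$, producing a uniform $n$ and an $n$-complement whose coefficients lie in the finite set $\tfrac1n\Z\cap[0,1]$. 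Combined with the $\varepsilon$-lc hypothesis and the volume bound, a Matsusaka-type effective very-ampleness estimate then bounds the Hilbert polynomial of $(X,A)$, yielding boundedness.

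Granting boundedness, the uniform index follows by a standard extraction argument. Parametrize the bounded family by a finite-type scheme $T$ with a projective family $\pi\colon\mathcal X\to T$ whose fibers include every $X$ in question. Over the generic point $\eta_i$ of each irreducible component $T_i$, the Calabi--Yau structure spreads out, after shrinking $T_i$ to a dense open $U_i$, to a relative boundary $\mathcal D_i$ with $(\mathcal X_t,\mathcal D_{i,t})$ fiberwise $\varepsilon$-lc and $K_{\mathcal X/T}+\mathcal D_i\sim_{\Q}0$ over $U_i$; the integral index $m_i$ valid at $\eta_i$ then holds on $U_i$. Applying this to each component and recursing on the closed complement $T\setminus\bigcup_i U_i$, Noetherian induction terminates after finitely many strata, and I take $m=\operatorname{lcm}$ of the finitely many indices so obtained. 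The fiberwise klt (indeed $\varepsilon$-lc) property is preserved because it is an open condition by lower semicontinuity of log discrepancies, so on each stratum the restricted boundary remains a klt boundary; this produces, for every $X$, a klt $\Delta$ with $m(K_X+\Delta)\sim 0$.

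The main obstacle is unquestionably the boundedness in the second paragraph. Boundedness of $\varepsilon$-lc Calabi--Yau type varieties polarized by a bounded-volume ample divisor is genuinely harder than its Fano counterpart precisely because $K_X$ is numerically trivial modulo an effective divisor and carries no intrinsic positivity, so the argument rests entirely on the auxiliary polarization $A$. Controlling $\Delta_0\cdot A^{d-1}$ uniformly, and hence bounding the degree of the Calabi--Yau boundary, is the crux: it is the point at which one must either invoke the strongest available boundedness theorems for polarized Calabi--Yau type varieties (in the spirit of Birkar's BAB and its Calabi--Yau generalizations, and of \cite{Jia25b}) or confront the essential difficulty that keeps the statement conjectural. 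A secondary subtlety is ensuring that the coefficient set of the complement boundary stays finite under the reduction, so that complement theory applies and the resulting index remains bounded; this I expect to be manageable once boundedness of the family is in hand.
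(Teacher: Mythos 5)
The statement you are trying to prove is Conjecture \ref{conj2}: the paper does not prove it and does not claim to. It is attributed to Jiao (cf.\ \cite[Section 3]{Jia25b}) and is used only as a \emph{hypothesis} in Theorem \ref{thm:absolute} (``If we assume Conjecture \ref{conj2}, then\dots''). So there is no proof in the paper to compare yours against, and any purported blind proof should be treated with suspicion from the start: you are attempting an open problem.

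Your proposal is, accordingly, not a proof but a reduction, and the reduction is circular at the decisive point. The entire weight rests on the claim that polarized $\varepsilon$-lc Calabi--Yau type varieties $(X,A)$ with $A^d\le v$ form a bounded family, and your route to that boundedness invokes ``Birkar's boundedness of complements'' applied to $-K_X\sim_{\Q}\Delta_0\ge 0$. But the complement theorems require Fano type (or at least a suitable positivity/nefness hypothesis on $-(K_X+B)$); for a Calabi--Yau type variety the existence of an $n$-complement with uniformly bounded $n$ is essentially the index-boundedness statement of Conjecture \ref{conj2} itself --- it is the thing to be proved, not an available tool. You acknowledge this in your final paragraph (``the essential difficulty that keeps the statement conjectural''), which is honest, but it means the argument never closes. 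Two further mismatches: (i) as stated in the paper, $m$ is quantified \emph{before} $\varepsilon,d,v$, whereas your construction at best yields $m=m(\varepsilon,d,v)$ from the finitely many strata of a family depending on those parameters; (ii) even granting boundedness, your Noetherian-induction step needs the relative $\Q$-linear triviality $K_{\mathcal X/T}+\mathcal D_i\sim_{\Q}0$ to specialize to an \emph{integral} relation $m_i(K_{\mathcal X_t}+\mathcal D_{i,t})\sim 0$ on every fiber of the stratum, together with fiberwise kltness; these are standard but require genuine arguments (constructibility of log discrepancies, compatibility of linear equivalence with specialization) that you only gesture at. The correct takeaway is that the statement is open; the paper's own contribution (Theorems \ref{thm:pklt} and \ref{thm:absolute}) is precisely to extract unconditional consequences (pklt-ness of the geometric generic fiber) and conditional ones (a positive answer to Question \ref{que} for $\Delta=0$) without proving Conjecture \ref{conj2}.
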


Assuming Conjecture \ref{conj2}, we can show that Question \ref{que} has an affirmative answer for $\Delta=0$ case.

\begin{theorem} \label{thm:absolute}
If we assume Conjecture \ref{conj2}, then Question \ref{que} is true for $\Delta=0$.
\end{theorem}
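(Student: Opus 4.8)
The plan is to use Conjecture \ref{conj2} to extract a single integer $m$ valid for all fibers over $S'$, and then to transport the klt property of anticanonical members from the dense set $S'$ to the generic point by a constructibility argument. First I would reduce to a convenient locus: after replacing $S$ by a dense open subset (each such shrinking keeps $S'$ dense, since a dense open meets every nonempty open and $S'$ meets every nonempty open) I may assume $S$ is smooth, $f$ is flat with $\Q$-Gorenstein fibers, and $K_{X/S}$ is $\Q$-Cartier with $K_{X/S}|_{X_s}\sim_\Q K_{X_s}$. Fixing a relatively ample $A$ on $X$, the fiber dimension $d=\dim X_s$ and the intersection number $A_s^d=:v$ are constant in the flat family, so each $X_s$ with $s\in S'$ is a $d$-dimensional $\varepsilon$-lc Calabi--Yau type variety with $A_s^d=v$. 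Hence Conjecture \ref{conj2} furnishes one integer $m$, independent of $s$ (and which I may enlarge to absorb the Cartier index), together with effective $\Q$-divisors $\Delta_s$ such that $(X_s,\Delta_s)$ is klt and $m(K_{X_s}+\Delta_s)\sim 0$. In particular $m\Delta_s\in|-mK_{X_s}|$ is a klt member for every $s\in S'$.

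Then I would organize these members into a family. With $m$ now fixed, shrink $S$ once more (generic flatness and Grauert's theorem) so that $\mathcal F:=f_*\O_X(-mK_{X/S})$ is locally free and commutes with base change; form the projective bundle $\mathbb{P}:=\mathbb{P}(\mathcal F)\xrightarrow{\pi}S$, whose fiber over $s$ is $|-mK_{X_s}|$, together with its universal divisor $\mathcal D$ on $X\times_S\mathbb{P}$, flat over $\mathbb{P}$. Let $\mathbb{P}^{\mathrm{klt}}\subseteq\mathbb{P}$ be the locus of points $p$ for which the geometric fiber $(X_{\overline p},\tfrac1m\mathcal D_{\overline p})$ is klt; since the non-klt locus is closed in a $\Q$-Gorenstein flat family, $\mathbb{P}^{\mathrm{klt}}$ is open. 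By the previous paragraph, for every $s\in S'$ the point $[m\Delta_s]$ lies in $\mathbb{P}^{\mathrm{klt}}_s$, so $S'\subseteq\pi(\mathbb{P}^{\mathrm{klt}})$. As $S'$ is dense and $\pi(\mathbb{P}^{\mathrm{klt}})$ is constructible (Chevalley), the set $\pi(\mathbb{P}^{\mathrm{klt}})$ is a dense constructible subset of the irreducible $S$ and therefore contains the generic point $\eta$.

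Consequently $\mathbb{P}^{\mathrm{klt}}_\eta\neq\emptyset$. Choosing a closed point of this nonempty open subscheme of $\mathbb{P}_\eta=|-mK_{X_\eta}|$ gives a divisor defined over a finite extension of $k(\eta)$; base changing to $\overline{k(\eta)}$ produces an effective $\Q$-divisor $\Delta'$ on $X_{\overline\eta}$ with $\Delta'\sim_\Q -K_{X_{\overline\eta}}$ and $(X_{\overline\eta},\Delta')$ klt. Thus $K_{X_{\overline\eta}}+\Delta'\sim_\Q 0$ with $(X_{\overline\eta},\Delta')$ klt, i.e.\ $X_{\overline\eta}$ is of klt log Calabi--Yau type, which is exactly the assertion of Question \ref{que} for $\Delta=0$.

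I expect the main obstacle to be the openness of the relative klt locus $\mathbb{P}^{\mathrm{klt}}$ and its compatibility with the passage to geometric fibers: one must guarantee that the klt property recorded by the members $[m\Delta_s]$ over the closed points of $S'$ is governed by the very same open condition that controls the geometric generic member over $\overline{k(\eta)}$, and that no denominators or Cartier-index issues vary along the family. Securing the uniform $m$ from Conjecture \ref{conj2}---which depends on the constancy of $d$ and of $A_s^d$ to keep all fibers inside a single bounded family---is the other delicate input.
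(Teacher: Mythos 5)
Your proposal is correct and its first half coincides with the paper's: both extract a single integer $m$ from Conjecture \ref{conj2} (using that $\dim X_s$ and $A_s^{\dim X_s}$ are constant in the flat family, so all fibers over $S'$ lie in one bounded class) and then work inside the linear system $|-mK_{X_s}|$. Where you diverge is in how the klt anticanonical member is transported to the geometric generic fiber. The paper picks one $s\in S'$, uses cohomology and base change (after a finite base change of $S$) to lift the klt member $m\Delta\in|-mK_{X_s}|$ to a divisor $\Delta'\in|-mK_X|$ on the total space with $K_X+\tfrac1m\Delta'\sim_{\Q,S}0$, and then invokes the generic restriction/semicontinuity theorem for multiplier ideals (\cite[Theorem 9.5.19]{Laz04b}) to conclude that $\bigl(X_{\overline{\eta}},\tfrac1m\Delta'_{\overline{\eta}}\bigr)$ is klt. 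You instead parametrize \emph{all} members via $\mathbb{P}(f_*\O_X(-mK_{X/S}))$ with its universal divisor, use openness of the klt locus in that family, and apply Chevalley's theorem to a dense constructible image to reach the generic point. The two routes rest on the same underlying semicontinuity of klt singularities in $\Q$-Gorenstein flat families, just packaged differently: the paper's single-divisor lift is shorter and only needs the statement for one divisor spreading out from one special fiber, while your parameter-space argument avoids having to realize a particular fiberwise member as the restriction of a global divisor (the surjectivity step) at the cost of needing openness of the relative klt locus on $\mathbb{P}$ and some care with the universal divisor and Cartier indices --- points you correctly flag and which are standard in characteristic $0$ (via inversion of adjunction). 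Both arguments are valid; yours is somewhat more robust but heavier, the paper's more economical.
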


Theorem \ref{thm:pklt} provides evidence for a positive answer to Question \ref{que} in full generality.

\smallskip

The rest of this paper is organized as follows. In Section \ref{Sect:2}, we recall basic definitions and properties on Nakayama's asymptotic order and potentially klt pairs, which will be used in the proof of our main theorems. In Section \ref{Sect:3}, we prove Theorem \ref{thm:pklt}, deduce Corollary \ref{cor:MMP}, and give a proof of Theorem \ref{thm:absolute}.

\section*{Acknowledgement}
The authors are grateful to Minzhe Zhu and Sung Rak Choi for their valuable comments. The authors thank Burt Totaro for his comments and for informing us of the paper \cite{Vial13}. The authors are partially supported by Samsung Science and Technology Foundation under Project Number SSTF-BA2302-03. D.-W. Lee is also partially supported by Basic Science Research Program through the National Research Foundation of Korea (NRF), funded by the Ministry of Education (No. RS-2023-00237440 and 2021R1A6A1A10039823). 

\section{Preliminaries}\label{Sect:2}

Throughout this paper, we work over an algebraically closed field $k$ of characteristic $0$, unless otherwise stated. By a \emph{variety}, we mean an integral, separated scheme of finite type over $k$. All varieties are assumed to be quasi-projective.

\smallskip

We now fix some notation and conventions that will be used throughout the paper. 

\begin{itemize}
    \item[(1)]
    A \emph{couple} $(X,\Delta)$ consists of a normal variety $X$ and an effective $\Q$-Weil divisor $\Delta$ on $X$. If, in addition, $K_X+\Delta$ is $\Q$-Cartier, we call $(X,\Delta)$ a \emph{pair}.

    \item[(2)]
    For a scheme $S$ and a point $s\in S$, we denote $k(s)$ by the residue field of $\mathcal{O}_{S,s}$ and regard $s$ as the morphism $s:\Spec k(s)\to S$.
    The associated geometric point is $\bar{s}:\Spec\overline{k(s)}\to S$.
    For an integral scheme $X$, denote by $k(X)$ its function field.

    \item[(3)]
    If $\eta$ is the generic point of an integral scheme $S$, we write
    $\bar{\eta}:\Spec\overline{k(\eta)}\to S$ and call
    $X_{\bar{\eta}}\coloneqq X\times_S \bar{\eta}$ the \emph{geometric generic fiber} of $X\to S$.

    \item[(4)]
    For a morphism $f:X\to S$ of schemes and a (geometric) point $s\to S$, set
    $$ X_s\coloneqq X\times_S s.$$
    If $\Delta$ is a divisor on $X$, we write $\Delta_s$ for its pullback to $X_s$ whenever it is defined.

    \item[(5)]
    For a morphism $f:X\to S$ between varieties, we say that $f$ is a \emph{fibration} if it is proper, surjective, and $f_*\mathcal{O}_X=\mathcal{O}_S$.
    
    \item[(6)] Let $(X,\Delta)$ be a pair, $f:X'\to X$ a proper birational morphism between normal varieties, and $E$ a prime divisor on $X'$.
    The \emph{log discrepancy} of $E$ with respect to $(X,\Delta)$ is
    $$
      A_{X,\Delta}(E)\coloneqq \mathrm{mult}_E\bigl(K_{X'}-f^*(K_X+\Delta)\bigr)+1.
    $$
    This definition does not depend on the choice of $f$. We say that $(X,\Delta)$ is \emph{kawamata log terminal (klt)} if $A_{X,\Delta}(E)>0$ for every prime divisor $E$ over $X$.

    \item[(7)]
    Let $X$ be a normal variety and $D$ a Cartier divisor on $X$.
    The base ideal of the complete linear system $|D|$ is the image of the natural map
    $$
      H^0(X,\mathcal{O}_X(D))\otimes \mathcal{O}_X(-D)\longrightarrow \mathcal{O}_X,
    $$
    and is denoted by $\mathfrak{b}(|D|)$.

    \item[(8)]
    For a variety $S$, by \emph{shrinking $S$}, we mean replacing $S$ by a variety $S'$ equipped with a quasi-finite morphism $S'\to S$
    (e.g., an open immersion or a generically finite cover), and replacing all data by their pullbacks to $S'$ while keeping the same notation.
\end{itemize}

\subsection{Asymptotic order}

In this subsection, we recall the notion of \emph{asymptotic order} for divisors.

\smallskip

Let $X$ be a normal projective variety and $D$ a big Cartier divisor. For each integer $m\geq 1$, denote by $\mathfrak{b}_m\coloneqq \mathfrak{b}(|mD|)$ the base ideal of $|mD|$. Then the collection $\mathfrak{b}_{\bullet}\coloneqq \left\{\mathfrak{b}_m\right\}_{m\in \Z_{\ge 0}}$ forms a graded sequence of ideals in $\O_X$, where we set $\mathfrak{b}_0\coloneqq \mathcal{O}_X$. For a prime divisor $E$ over $X$, we define the \emph{asymptotic order} of $D$ with respect to $E$ by
$$ \mathrm{ord}_E(\|D\|)\coloneqq \inf_{\substack{m\in \Z_{\geq 1} \\ \mathfrak{b}_m\ne (0)}}\frac{\mathrm{ord}_E(\mathfrak{b}_{m})}{m}.$$
If $D$ is a big $\Q$-divisor on $X$, then we define
$$ \ord_E(\|D\|)\coloneqq \frac{1}{m}\ord_E(\|mD\|),$$
where $n$ is a positive integer such that $mD$ is Cartier. More generally, if $D$ is a pseudoeffective $\Q$-divisor on $X$, then for an ample Cartier divisor $A$ on $X$, we define
$$ \sigma_{E}(D)\coloneqq \lim_{\substack{\varepsilon\to 0^{+} \\ \varepsilon \text{ is rational }}}\mathrm{ord}_E(\|D+\varepsilon A\|).$$
It is known that $\sigma_{E}(D)<\infty$ and the definition does not depend on the choice of $A$ (cf. \cite[Chapter 3.1]{Nak04}). Moreover, if $D$ is big, then $\sigma_{E}(D)=\ord_E(\|D\|)$.

\smallskip

We also note that for any pseudoeffective $\Q$-divisors $D$ and $D'$ on $X$, and any prime divisor $E$ over $X$, we have the following inequality
$$ \sigma_{E}(D+D')\le \sigma_{E}(D)+\sigma_{E}(D').$$

\smallskip

Let $(X,\Delta)$ be a projective klt pair, and $D$ a pseudoeffective $\Q$-divisor on $X$. We define the \emph{log canonical threshold} as
$$ \mathrm{lct}_{\sigma}(X,\Delta,D)\coloneqq \inf_{E}\frac{A_{X,\Delta}(E)}{\sigma_{E}(D)},$$
where the $\inf$ is taken over all prime divisors $E$ over $X$.

Let us recall the following lemma, which is essentially \cite[Proposition 2.1]{FKL16} and appears in the following form in \cite[Lemma 2.11]{Kim25}.

\begin{lemma}[{cf. \cite[Proposition 2.1]{FKL16} and \cite[Lemma 2.11]{Kim25}}] \label{volume asymptotic order}
Let $X$ be a normal projective variety, $D$ a big $\Q$-Cartier $\Q$-divisor, and $E$ a prime divisor on $X$. Suppose that $a$ is a positive rational number. Then $\sigma_E(D)<a$ if and only if
$$ \mathrm{vol}(D-aE)<\mathrm{vol}(D).$$
\end{lemma}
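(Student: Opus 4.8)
The plan is to prove the equivalent statement that $\vol(D-aE)=\vol(D)$ if and only if $\sigma_E(D)\ge a$; since $D-aE\le D$ we always have $\vol(D-aE)\le\vol(D)$, so this is genuinely the contrapositive of the asserted equivalence. Because $D$ is big we may use $\sigma_E(D)=\ord_E(\|D\|)=\inf_m \ord_E(\mathfrak b_m)/m$, where $\mathfrak b_m=\mathfrak b(|mD|)$, and throughout I will take $m$ divisible enough that $mD$ is Cartier and $ma\in\Z$.

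For the implication $\sigma_E(D)\ge a\Rightarrow\vol(D-aE)=\vol(D)$, I would argue directly with base ideals. Since the infimum defining $\ord_E(\|D\|)$ is at least $a$, every term satisfies $\ord_E(\mathfrak b_m)\ge ma$; equivalently every section $s\in H^0(X,\O_X(mD))$ vanishes to order at least $ma$ along $E$. Hence the natural inclusion $H^0(X,\O_X(mD-maE))\hookrightarrow H^0(X,\O_X(mD))$ is an equality for all sufficiently divisible $m$, so $h^0(m(D-aE))=h^0(mD)$ and the two volumes agree. This already yields the ``only if'' half of the lemma.

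For the reverse implication $\sigma_E(D)<a\Rightarrow\vol(D-aE)<\vol(D)$, I would pass to an Okounkov body. As $X$ is normal it is smooth at a general point of $E$, so one can choose an admissible flag $Y_\bullet$ with $Y_1=E$ and form $\Delta_{Y_\bullet}(D)\subseteq\R^d$, $d=\dim X$, whose associated valuation has first coordinate $\nu_1=\ord_E$. I would then record three facts: (i) $\vol(D)=d!\cdot\mathrm{vol}_{\R^d}(\Delta_{Y_\bullet}(D))$; (ii) the asymptotic minimal vanishing order equals the minimum of the first coordinate, i.e. $\sigma_E(D)=\min\{x_1:x\in\Delta_{Y_\bullet}(D)\}$, which follows from $\ord_E(\mathfrak b_m)=\min_s\nu_1(s)$ together with subadditivity of $\ord_E(\mathfrak b_\bullet)$; and (iii) the slice identity $\vol(D-aE)=d!\cdot\mathrm{vol}_{\R^d}\big(\Delta_{Y_\bullet}(D)\cap\{x_1\ge a\}\big)$. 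Granting these, if $\sigma_E(D)<a$ then the minimum of $x_1$ over $\Delta_{Y_\bullet}(D)$ is strictly below $a$; since $\Delta_{Y_\bullet}(D)$ is a full-dimensional convex body (as $D$ is big), the region $\Delta_{Y_\bullet}(D)\cap\{x_1<a\}$ has positive Lebesgue measure, and the slice identity forces $\vol(D-aE)<\vol(D)$, as desired.

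The crux, and the step I expect to cost the most, is the slice identity (iii). The point is that $H^0(m(D-aE))$ sits inside $H^0(mD)$ exactly as the sections with $\nu_1\ge ma$, so its dimension counts the valuation vectors of $H^0(mD)$ lying in the half-space $\{x_1\ge ma\}$; one then needs the equidistribution of these valuation vectors toward normalized Lebesgue measure on $\Delta_{Y_\bullet}(D)$, applied to the indicator of $\{x_1\ge a\}$. This is legitimate because the boundary hyperplane $\{x_1=a\}$ meets the convex body in a set of measure zero, so the discontinuity of the indicator is harmless. A reader preferring to avoid this machinery can instead invoke \cite[Proposition 2.1]{FKL16} directly, from which both implications follow formally; the Okounkov-body route is the self-contained argument I would write out.
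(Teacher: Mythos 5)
The paper does not actually prove this lemma --- it is imported verbatim from \cite{FKL16} and \cite{Kim25} --- so your proposal is being measured against a citation rather than an argument, and as a self-contained proof it is essentially correct. The easy direction (if $\sigma_E(D)\ge a$ then every term of the defining infimum satisfies $\ord_E(\mathfrak b_m)\ge ma$, so $H^0(m(D-aE))=H^0(mD)$ for divisible $m$) is fine, though note you have the labels backwards: that implication is the contrapositive of ``$\mathrm{vol}(D-aE)<\mathrm{vol}(D)\Rightarrow\sigma_E(D)<a$'', i.e.\ the \emph{if} half, not the \emph{only if} half. For the other direction, your three ingredients are all genuine theorems of Lazarsfeld--Musta\c{t}\u{a}: (i) is their main volume formula, (ii) is the standard identification of $\min\{x_1:x\in\Delta_{Y_\bullet}(D)\}$ with $\ord_E(\|D\|)$ (the infimum of a linear functional is unchanged by convex hull and closure), and (iii) is their slice theorem, which as you say rests on equidistribution of the valuation vectors; your convexity argument that $\min x_1<a$ forces the open region $\{x_1<a\}$ to meet the interior of the full-dimensional body, hence to carry positive measure, correctly closes the loop. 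Two small points of care: the flag must be chosen through a point where both $X$ and $E$ are smooth (normality only guarantees this at a general point of $E$, as you note), and when $D-aE$ fails to be big one should either observe the conclusion is trivial ($\mathrm{vol}(D-aE)=0$) or check that the equidistribution argument still computes the slice volume as $0$; either works. Since you explicitly offer \cite{FKL16} as a fallback for the one step you do not write out in full, I would accept this as a correct, if heavier, alternative to the paper's citation.
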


\subsection{Potentially klt pair}
In this subsection, we introduce the notion of \emph{potentially klt} pairs and some of their properties; see, for example, \cite{CP16}, \cite{CJK23}, and \cite{CJKL25} for further details.

\begin{definition} \label{def:pklt}
Let $(X,\Delta)$ be a projective pair. We say that $(X,\Delta)$ is \emph{potentially klt (pklt)} if $-(K_X+\Delta)$ is pseudoeffective, and there is a positive number $\varepsilon>0$ such that
$$ A_{X,\Delta}(E)-\sigma_E(-(K_X+\Delta))>\varepsilon$$
for every prime divisor $E$ over $X$.
\end{definition}

The following is a valuative characterization of pklt pairs, which was recently proved in \cite{CJKL25}.

\begin{theorem}[{cf. \cite[Proposition 4.2]{CJKL25}}] \label{lct}
Let $(X,\Delta)$ be a pklt pair. Then 
$$\mathrm{lct}_{\sigma}(X,\Delta,-(K_X+\Delta))>1.$$
\end{theorem}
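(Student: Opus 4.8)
The plan is to rewrite the desired strict inequality $\mathrm{lct}_{\sigma}(X,\Delta,D)>1$, where $D:=-(K_X+\Delta)$, as a \emph{uniform multiplicative} gap $A_{X,\Delta}(E)\ge (1+\delta)\,\sigma_E(D)$ holding for all prime divisors $E$ over $X$ and some fixed $\delta>0$, and to produce such a $\delta$ out of the \emph{additive} gap $A_{X,\Delta}(E)-\sigma_E(D)>\varepsilon$ supplied by the definition of pklt. First I would dispose of the harmless valuations: since $\mathrm{lct}_{\sigma}$ is an infimum of ratios, the constraint is only felt on $E$ with $\sigma_E(D)>0$, and whenever $\sigma_E(D)$ is bounded, say $\sigma_E(D)\le C$, the additive gap already yields $A_{X,\Delta}(E)/\sigma_E(D)>1+\varepsilon/C$. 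Thus the entire difficulty is concentrated on prime divisors $E$ with $\sigma_E(D)$, and necessarily $A_{X,\Delta}(E)$, large.

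Next I would reduce to the case where $D$ is big. As $-(K_X+\Delta)$ is only assumed pseudoeffective, I fix an ample Cartier divisor $H$ and work with the big classes $D_t:=D+tH$ for small rational $t>0$, using $\sigma_E(D)=\lim_{t\to 0^+}\sigma_E(D_t)$ together with the subadditivity $\sigma_E(D_t)\le \sigma_E(D)+t\,\sigma_E(H)$ recorded before Lemma \ref{volume asymptotic order}. On a fixed model $\pi\colon Y\to X$ extracting $E$, I would then invoke Lemma \ref{volume asymptotic order} to convert each inequality $\sigma_E(D_t)<a$ into the volume comparison $\mathrm{vol}(\pi^*D_t-aE)<\mathrm{vol}(\pi^*D_t)$; writing $K_Y+\Delta_Y=\pi^*(K_X+\Delta)$, so that $\pi^*D=-(K_Y+\Delta_Y)$, the choice $a=A_{X,\Delta}(E)=A_{Y,\Delta_Y}(E)$ turns $\pi^*D_t-aE$ into $-(K_Y+\Delta_Y')+t\,\pi^*H$, where $\Delta_Y'$ is obtained from $\Delta_Y$ by raising the coefficient of $E$ to $1$. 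This recasts the target inequality as the statement that bumping the coefficient of $E$ up to its log canonical value strictly drops the anticanonical volume, uniformly in $E$.

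The main obstacle, and the heart of the matter, is that the additive gap does \emph{not} by itself upgrade to a multiplicative one: for a prime divisor with $A_{X,\Delta}(E)=\sigma_E(D)+c$ where $c>\varepsilon$ but $\sigma_E(D)\to\infty$, the ratio $A_{X,\Delta}(E)/\sigma_E(D)$ tends to $1$, so a priori the infimum could equal $1$. Equivalently, after normalizing a divisorial valuation to $A_{X,\Delta}=1$, the pklt condition only gives $\sigma_E(D)<1-\varepsilon/A_{X,\Delta}(E)$, whose buffer degrades as $A_{X,\Delta}(E)\to\infty$; a naive compactness argument on the space of valuations fails because the normalizing constants tend to $0$. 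To overcome this I would show that the $\sigma$-lct is in fact computed, or approached, only along valuations of \emph{bounded} log discrepancy. Concretely, I would pass to a fixed model $Y$ carrying the negative part $N_\sigma$ of $\pi^*D$, so that $\sigma_E(D)=\mathrm{ord}_E(N_\sigma)$ for divisors $E$ on $Y$, reducing the problem there to the ordinary log canonical threshold of $(Y,\Delta_Y;N_\sigma)$, which is attained on a log resolution by finitely many divisors and hence is $>1$ by the pklt inequality; the remaining task is to bound the contribution of divisors lying over $Y$, where $\sigma$ need not come from a fixed divisor, and to control the error terms $t\,\sigma_E(H)$ as $t\to 0^+$. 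I expect this last step, ruling out a sequence of high-discrepancy valuations whose normalized asymptotic order creeps up to $1$, to be the genuinely hard point, requiring a semicontinuity or finiteness property of $\sigma_E$ along such sequences, or an MMP-extraction producing a single computing divisor, rather than the defining inequality of pklt alone.
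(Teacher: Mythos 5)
First, note that the paper does not prove this statement itself: Theorem \ref{lct} is imported verbatim from \cite[Proposition 4.2]{CJKL25}, so there is no in-paper argument to compare yours against. Judged on its own terms, your proposal is a plan rather than a proof, and the hole sits exactly where the theorem's content lies. Writing $D=-(K_X+\Delta)$ as you do, you correctly observe that the additive gap $A_{X,\Delta}(E)-\sigma_E(D)>\varepsilon$ of Definition \ref{def:pklt} yields the multiplicative bound $A_{X,\Delta}(E)/\sigma_E(D)>1+\varepsilon/C$ only on the locus where $\sigma_E(D)\le C$, and that the entire issue is a putative sequence of divisors $E_i$ with $\sigma_{E_i}(D)\to\infty$ and ratio tending to $1$. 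But you then explicitly defer this case to ``a semicontinuity or finiteness property of $\sigma_E$'' or ``an MMP-extraction producing a single computing divisor'' that you do not supply. That is not a routine detail to be filled in later: ruling out such escaping sequences is precisely the content of \cite[Proposition 4.2]{CJKL25}, and the defining inequality of pklt visibly does not accomplish it on its own. As it stands, the proposal does not prove the theorem.

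Second, the one concrete reduction you do offer is itself not justified. For a divisor $E$ exceptional over your chosen model $Y$, the quantity controlled by the pklt hypothesis is $\sigma_E(D)$, whereas the ordinary log canonical threshold of $(Y,\Delta_Y;N_\sigma)$ involves $\mathrm{ord}_E(N_\sigma)$, the multiplicity along $E$ of the fixed effective divisor $N_\sigma$. These agree for $E$ a prime divisor \emph{on} $Y$ (that is Nakayama's definition of the negative part) but not for $E$ \emph{over} $Y$: for instance, for the exceptional divisor of the blowup of a point where two components of $N_\sigma$ meet, $\mathrm{ord}_E(N_\sigma)$ is the sum of their coefficients and can strictly exceed $\sigma_E(D)$. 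Hence the assertion that $\mathrm{lct}(Y,\Delta_Y;N_\sigma)>1$ ``by the pklt inequality'' does not follow; and since in general no single model carries a Zariski decomposition of $\pi^*D$, one cannot reduce to a fixed $Y$ in this way. The honest course here is to cite \cite[Proposition 4.2]{CJKL25}, as the paper does, rather than to re-derive the statement along the route you sketch.
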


The pklt condition is strong enough to guarantee the existence of MMPs.

\begin{theorem} [{cf. \cite[Corollary 1.3]{CJKL25}}] \label{thm:pklt MMP}
Let $(X,\Delta)$ be a pklt pair. Then 
\begin{itemize}
    \item[\emph{(a)}] there exists a $-(K_X+\Delta)$-MMP with scaling of an ample divisor, and
    \item[\emph{(b)}] for every big $\Q$-Cartier divisor $D$ on $X$, there exists a $D$-MMP with scaling of an ample divisor that terminates.
\end{itemize}
\end{theorem}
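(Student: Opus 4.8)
The plan is to reduce both assertions to the foundational existence and termination theorems of the minimal model program for klt pairs, manufacturing the klt boundaries these theorems require out of the pklt hypothesis. The starting point is the reformulation supplied by Theorem \ref{lct}. Writing $D_0:=-(K_X+\Delta)$, which is pseudoeffective by Definition \ref{def:pklt}, the inequality $\mathrm{lct}_{\sigma}(X,\Delta,D_0)>1$ means there is a rational $t>1$ with $A_{X,\Delta}(E)>t\,\sigma_E(D_0)$ for every prime divisor $E$ over $X$; in particular $A_{X,\Delta}(E)>0$ for all $E$, so $(X,\Delta)$ is itself klt. The value of this reformulation is that it controls every valuation at once with a uniform multiplicative gap, which is exactly what is needed to add a boundary along $D_0$ and stay klt.

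For (b), let $D$ be big and $\Q$-Cartier. I would realise the $D$-MMP as the $(K_X+\Theta)$-MMP of a klt pair $(X,\Theta)$ with $\Theta$ big and $K_X+\Theta\sim_{\Q}\lambda D$ for a small rational $\lambda>0$; writing $\Theta=\Delta+G$ this demands an effective $G\sim_{\Q}\lambda D+D_0$, necessarily big, with $(X,\Delta+G)$ klt. To produce such a $G$ I would pass to the asymptotic linear system of $\lambda D+D_0$. Here the ratio bounds combine cleanly: by subadditivity and homogeneity $\sigma_E(\lambda D+D_0)\le \lambda\,\sigma_E(D)+\sigma_E(D_0)$, the pklt reformulation gives $\sigma_E(D_0)<A_{X,\Delta}(E)/t$, and since $(X,\Delta)$ is klt and $D$ is big the number $c:=\mathrm{lct}_{\sigma}(X,\Delta,D)$ is positive, so $\sigma_E(D)\le A_{X,\Delta}(E)/c$. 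Choosing $\lambda<c(t-1)/t$ then forces $A_{X,\Delta}(E)>\sigma_E(\lambda D+D_0)$ for every $E$, uniformly, and a general member of a suitable multiple furnishes the required klt $G$ (via the triviality of the associated asymptotic multiplier ideal). Once $(X,\Theta)$ is klt with $\Theta$ big, existence of the minimal model and termination of the $(K_X+\Theta)$-MMP with scaling of an ample divisor are guaranteed; as $K_X+\Theta\sim_{\Q}\lambda D$, this is a $D$-MMP, which gives (b).

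For (a) the target class $D_0$ is only pseudoeffective, so no big klt boundary exists in one shot. Instead, for small rational $\delta>0$ I would perturb to the big class $sD_0+\delta A$ with $A$ ample and $1<s<t$, and run the same asymptotic construction: using $\sigma_E(A)\le A_{X,\Delta}(E)/c'$ with $c'=\mathrm{lct}_{\sigma}(X,\Delta,A)>0$, the bound $A_{X,\Delta}(E)>\sigma_E(sD_0+\delta A)$ holds uniformly for $\delta<c'(t-s)/t$, producing a klt pair $(X,\Delta+\Gamma_\delta)$ with $\Gamma_\delta\sim_{\Q}sD_0+\delta A$ and $K_X+\Delta+\Gamma_\delta\sim_{\Q}(s-1)D_0+\delta A$. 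Each step of the resulting klt MMP with scaling exists by the existence of flips and contractions. Since (a) asserts only the existence of the MMP and not its termination, it remains to identify these steps, for $\delta$ small, with $D_0$-MMP steps and to pass to the limit $\delta\to0$, using that $A_{X,\Delta}(E)$ and $\sigma_E$ transform controllably under each birational modification so that the pklt inequality survives; note that termination is genuinely lost in this limit, because the directing boundary ceases to be big, which is consistent with (a) not claiming it.

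The main obstacle in both parts is precisely the passage from the asymptotic inequality $A_{X,\Delta}(E)>t\,\sigma_E(D_0)$ — a condition on infinitely many valuations — to an honest effective boundary that keeps the pair klt while moving the numerical class only by a controlled amount. A fixed representative of $D_0$ (or of $\lambda D+D_0$) is useless, since its multiplicity along divisors over the stable base locus is uncontrolled; the remedy is to work with asymptotic multiplier ideals, to convert $\sigma$-estimates into volume comparisons through Lemma \ref{volume asymptotic order}, and to exploit the \emph{multiplicative} gap from Theorem \ref{lct} rather than an additive one (this is what makes the smallness thresholds on $\lambda$ and $\delta$ uniform in $E$). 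For (a) there is the additional, and genuinely delicate, task of taking the limit $\delta\to0$ to descend from the big perturbations $sD_0+\delta A$ to the pseudoeffective class $D_0$ itself while preserving the pklt gap along the MMP.
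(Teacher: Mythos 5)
Your argument for part (b) is essentially the paper's own proof. The paper invokes Theorem \ref{lct} to get $\mathrm{lct}_{\sigma}(X,\Delta,-(K_X+\Delta))>1$, uses subadditivity of $\sigma_E$ together with the positivity of $\mathrm{lct}_{\sigma}(X,\Delta,D)$ to conclude $\mathrm{lct}_{\sigma}(X,\Delta,-(K_X+\Delta)+\varepsilon D)>1$ for small rational $\varepsilon>0$, then cites \cite[Lemma 1.60]{Xu25} to produce an effective $\Delta'\sim_{\Q}-(K_X+\Delta)+\varepsilon D$ with $(X,\Delta+\Delta')$ klt --- this is exactly the ``general member plus asymptotic multiplier ideal'' step you describe --- and finishes with \cite[Corollary 1.4.2]{BCHM10}. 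Your multiplicative bookkeeping with $t$ and $c$ is the same computation written slightly differently, and it is correct.

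Part (a) is where there is a genuine gap. The paper does not prove (a) at all: it is quoted verbatim as \cite[Corollary 1.3]{CJKL25} and used as a black box. Your sketch attempts to prove it by running klt MMPs for the perturbed big classes $(s-1)D_0+\delta A$ and ``passing to the limit $\delta\to 0$,'' but this limit is precisely the hard content of the statement and is not carried out. Concretely: an extremal ray that is negative on $(s-1)D_0+\delta A$ need not be negative on $D_0$, the rays contracted (and hence the entire sequence of models) can vary with $\delta$, and you give no argument that the steps stabilize or that the pklt inequality $A_{X,\Delta}(E)>\sigma_E(D_0)+\varepsilon$ is preserved after each flip or divisorial contraction --- $\sigma_E$ of a merely pseudoeffective class does not transform in an obviously controlled way under these modifications. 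You flag this yourself (``it remains to identify these steps \dots and to pass to the limit''), but flagging it does not close it; as written, (a) is not proved, and the honest route is to cite \cite[Corollary 1.3]{CJKL25} as the paper does.
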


\begin{proof}
Part (a) is precisely \cite[Corollary 1.3]{CJKL25}.

\smallskip

For (b), we claim that there is a positive number $\varepsilon>0$ such that 
$$\mathrm{lct}_{\sigma}(X,\Delta,-(K_X+\Delta)+\varepsilon D)>1.$$ 
Indeed, we have
$$ 
\begin{aligned}
\frac{1}{\mathrm{lct}_{\sigma}(X,\Delta,-(K_X+\Delta)+\varepsilon D)}&=\sup_{E}\frac{\sigma_{E}(-(K_X+\Delta)+\varepsilon D)}{A_{X,\Delta}(E)}
\\ &\le \sup_{E}\frac{\sigma_{E}(-(K_X+\Delta))+\varepsilon \sigma_{E}(D)}{A_{X,\Delta}(E)}
\\ &\le \sup_{E}\frac{\sigma_{E}(-(K_X+\Delta))}{A_{X,\Delta}(E)}+\varepsilon \sup_{E}\frac{\sigma_{E}(D)}{A_{X.\Delta}(E)}
\\ &=\frac{1}{\mathrm{lct}_{\sigma}(X,\Delta,-(K_X+\Delta))}+\frac{\varepsilon}{\mathrm{lct}_{\sigma}(X,\Delta,D)}.
\end{aligned}
$$
By Theorem \ref{lct}, we have $\mathrm{lct}_{\sigma}(X,\Delta,-(K_X+\Delta))>1$. By choosing $\varepsilon>0$ sufficiently small, we obtain the claim.

\smallskip

Hence, by \cite[Lemma 1.60]{Xu25}, there exists an effective $\Q$-Weil divisor $\Delta'\sim_{\Q}-(K_X+\Delta)+\varepsilon D$ on $X$ such that $(X,\Delta+\Delta')$ is klt. Consequently, $K_X+\Delta+\Delta'\sim_{\Q}\varepsilon D$ and by \cite[Corollary 1.4.2]{BCHM10}, we may run a $(K_X+\Delta+\Delta')$-MMP, i.e., a $D$-MMP with scaling that terminates.
\end{proof}

We also recall the following result, which shows that pklt pairs with a big anticanonical class are of Fano type.

\begin{theorem}[{cf. \cite[Corollary 3.10]{CJK23}}] \label{thm:FT}
Let $(X,\Delta)$ be a pklt pair with $-(K_X+\Delta)$ big. Then $X$ is a variety of Fano type.
\end{theorem}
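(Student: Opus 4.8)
The plan is to produce, directly on $X$, an effective boundary $\Delta^+$ making $(X,\Delta+\Delta^+)$ klt with $-(K_X+\Delta+\Delta^+)$ ample; this is exactly what it means for $X$ to be of Fano type. First I would record two consequences of the hypotheses. Since $(X,\Delta)$ is pklt, Definition \ref{def:pklt} forces $A_{X,\Delta}(E)>\varepsilon>0$ for every prime divisor $E$ over $X$, so $(X,\Delta)$ is klt and $\mathrm{lct}_\sigma$ is defined; and by Theorem \ref{lct} we have $\ell_0\coloneqq\mathrm{lct}_\sigma(X,\Delta,-(K_X+\Delta))>1$. Write $D\coloneqq-(K_X+\Delta)$, which is big by assumption. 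I would then fix an ample Cartier divisor $A$ and a rational number $\tau>0$ small enough that $D-\tau A$ is still big (possible since bigness is an open condition and $D$ is big), and set $\ell_\tau\coloneqq\mathrm{lct}_\sigma(X,\Delta,D-\tau A)$.

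The key step is to show that $\mathrm{lct}_\sigma(X,\Delta,D-tA)>1$ for all sufficiently small rational $t>0$. I would first note that $\ell_\tau>0$: choosing any effective $\Q$-divisor $G\sim_\Q D-\tau A$ (which exists as $D-\tau A$ is big), the inequality $\sigma_E(D-\tau A)\le\mathrm{ord}_E(G)$ gives $\ell_\tau\ge\mathrm{lct}(X,\Delta;G)>0$ by the usual positivity of the log canonical threshold for the klt pair $(X,\Delta)$. Then, for $0<t<\tau$, writing $D-tA=(1-\tfrac t\tau)D+\tfrac t\tau(D-\tau A)$ as a convex combination of big divisors and using homogeneity together with the subadditivity of $\sigma_E$ recalled in Section \ref{Sect:2}, I obtain for every $E$ the bound
$$
\sigma_E(D-tA)\ \le\ \Bigl(1-\tfrac t\tau\Bigr)\sigma_E(D)+\tfrac t\tau\,\sigma_E(D-\tau A)\ \le\ A_{X,\Delta}(E)\Bigl(\tfrac{1-t/\tau}{\ell_0}+\tfrac{t/\tau}{\ell_\tau}\Bigr).
$$
Taking the infimum over $E$ yields $\mathrm{lct}_\sigma(X,\Delta,D-tA)\ge\bigl(\tfrac{1-t/\tau}{\ell_0}+\tfrac{t/\tau}{\ell_\tau}\bigr)^{-1}$, and since the bracket tends to $1/\ell_0<1$ as $t\to0^+$, this lower bound exceeds $1$ for all small $t$. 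The only delicate point here is that the estimate must hold \emph{uniformly} over the infinitely many divisors $E$ over $X$; this is precisely what convexity of $\sigma_E$ buys us, once positivity $\ell_\tau>0$ is known at the endpoint $D-\tau A$.

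Finally, I would fix such a small $t$ and apply \cite[Lemma 1.60]{Xu25} exactly as in the proof of Theorem \ref{thm:pklt MMP}: since $D-tA$ is big and $\mathrm{lct}_\sigma(X,\Delta,D-tA)>1$, there is an effective $\Q$-Weil divisor $\Delta^+\sim_\Q D-tA$ with $(X,\Delta+\Delta^+)$ klt. Then
$$
-(K_X+\Delta+\Delta^+)\ =\ D-\Delta^+\ \sim_\Q\ D-(D-tA)\ =\ tA
$$
is ample, so $(X,\Delta+\Delta^+)$ is a klt pair with ample anticanonical class, i.e.\ $X$ is of Fano type. I expect the main obstacle to be the uniform perturbation estimate of the middle paragraph; everything else is a direct application of Theorem \ref{lct} and the cited results, and the argument stays entirely on $X$, so no issue of transporting the Fano-type property across a birational model arises.
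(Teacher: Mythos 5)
Your argument is internally correct given the two black boxes you invoke, but it takes a genuinely different route from the paper, which does not prove Theorem \ref{thm:FT} at all: the statement is quoted verbatim from \cite[Corollary 3.10]{CJK23}, and the accompanying remark stresses that the original argument of \cite[Theorem 5.1]{CP16} had a gap that was only repaired in \cite{CJK23} via valuation theory. Your proof instead derives the statement from Theorem \ref{lct} by the same perturbation-plus-\cite[Lemma 1.60]{Xu25} mechanism that the paper uses in the proof of Theorem \ref{thm:pklt MMP}(b): the convexity estimate $\sigma_E(D-tA)\le(1-t/\tau)\sigma_E(D)+(t/\tau)\sigma_E(D-\tau A)$ is valid (subadditivity plus homogeneity of $\sigma_E$), the endpoint bound $\ell_\tau>0$ via an effective representative $G\sim_\Q D-\tau A$ and $\sigma_E(D-\tau A)\le\mathrm{ord}_E(G)$ is correct, and the uniformity over all $E$ does follow because both $\sigma_E(D)$ and $\sigma_E(D-\tau A)$ are bounded by $A_{X,\Delta}(E)$ times constants $1/\ell_0$ and $1/\ell_\tau$. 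The one point you must check before claiming this as a proof is circularity: Theorem \ref{lct} is \cite[Proposition 4.2]{CJKL25}, a later paper by essentially the same authors, and it is plausible that its proof in the case where $-(K_X+\Delta)$ is big already relies on \cite[Corollary 3.10]{CJK23} (for instance by passing to a Fano-type model to control the Nakayama functions $\sigma_E$). If so, your argument only shows that the two statements are easily interderivable via Xu's lemma, not that Theorem \ref{thm:FT} admits an independent proof. What your approach buys, if the circularity is absent, is a short deduction bypassing the valuation-theoretic machinery of \cite{CJK23}; what the citation route buys the paper is a proof that does not presuppose the uniform ratio bound $\mathrm{lct}_\sigma(X,\Delta,-(K_X+\Delta))>1$, which, as you should note, is strictly stronger-looking than (and not a formal consequence of) the uniform difference bound in Definition \ref{def:pklt}.
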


\begin{remark}
We expect that for a projective klt pair $(X,\Delta)$, the condition that $(X,\Delta)$ is pklt is equivalent to the existence of the anticanonical minimal model of $(X,\Delta)$. The termination of an anticanonical MMP for pklt pairs would imply this equivalence. However, little is known about the termination of such MMPs.

\smallskip

Theorem \ref{thm:FT} was first stated in \cite[Theorem 5.1]{CP16} and its proof relied on \cite[Proposition 4.4]{CP16}. However, the proofs in both the arXiv and published versions of \cite[Proposition 4.4]{CP16} contain a gap. Only recently, a complete proof has been provided in \cite[Corollary 3.10]{CJK23} by using valuation theory.
For more details on this issue, we refer the reader to \cite[Remark 3.11]{CJK23}.
\end{remark}

\subsection{Models}
In this subsection, we introduce the notion of \emph{a model} over a base, in a form analogous to the notion of a model in reduction mod $p$.

\smallskip

\begin{definition}
Let $S$ be a variety and $\eta\in S$ the generic point. Let $X$ be a variety over $\overline{k(\eta)}$.
\begin{itemize}
    \item[(a)] If $X_S$ is a variety flat over $S$ such that $X_S\times_S \overline{k(\eta)}=X$, then we call $X_S$ a \emph{model} of $X$ over $S$.
    \item[(b)] Let $X_S$ be a model of $X$ over $S$ and $D\subseteq X$ a Weil divisor. If $D_S\subseteq X_S$ is an effective Weil divisor on $X_S$ flat over $S$ with $(D_S\subseteq X_S)\times_S \overline{k(\eta)}=D\subseteq X$, then we call $D_S$ a \emph{model} of $D$ over $S$.
    \item[(c)] Let $f:X'\to X$ be a morphism between two varieties over $\overline{k(\eta)}$. If there exists an $S$-morphism $f_S:X'_S\to X_S$ such that $f_S\times_S \overline{k(\eta)}=f$, $X'_S$, and $X_S$ are flat over $S$, then we say that $f_S$ is a \emph{model} of $f$ over $S$.
    \item[(d)] Let $X_S$ be a model of $X$ over $S$ and $ D=\sum q_i D_i$
    a $\Q$-Weil divisor on $X$ with $q_i\in \Q$ and prime $D_i$. If $D_{iS}$ is a model of $D_i$ over $S$ for each $i$, then we say that
    $$ D_S\coloneqq \sum q_iD_{iS}$$
    is a \emph{model} of $D$ over $S$. Note that if $D$ is $\Q$-Cartier, then we may choose $D_S$ as a $\Q$-Cartier divisor on $X_S$.
    \item[(e)] Let $(X,\Delta)$ be a pair, $X_S$ a model of $X$ over $S$, and $\Delta_S$ is so. Then we call $(X_S,\Delta_S)$ a \emph{model} of $(X,\Delta)$ over $S$.
\end{itemize}
\end{definition}

\begin{remark}
Let $S$ be a variety, and $\eta\in S$ the generic point. Then, for every quasi-projective variety $X$ over $\overline{k(\eta)}$, after shrinking $S$, there is a model $X_S$ of $X$ over $S$. Indeed, the variety $X$ is determined by finitely many equations in $\overline{k(\eta)}[x_1,\cdots,x_{d+1}]$ for some $d$. Hence, there exists a finite extension $k'/k(\eta)$ and a scheme $X_{k'}$ over $k'$ such that $X_{k'}\times_{k'}\overline{k(\eta)}=X$. Let $S'\to S$ be a finite morphism, $\eta_{S'}$ the generic point of $S'$ such that $k(\eta_{S'})=k'$, and $X_{S'}$ be the closure of $X_{k'}$ in $\P^d_{S'}$. By generic flatness (cf. \cite[Proposition 052A]{Stacks}), after shrinking $S'$, we may assume that $X_{S'}$ is a flat $S'$-scheme. Replacing $S$ by $S'$ gives a model $X_S$ of $S$. 

\smallskip

A similar argument applies to constructing models of morphisms over $\overline{k(\eta)}$ and a $\Q$-Weil divisor on $X$.
\end{remark}

\begin{notation} \label{nota}
Fix a variety $S$ with a (geometric) point $s$. Let $X_S$ and $X'_S$ be flat $S$-schemes and $f_S:X'_S\to X_S$ be an $S$-morphism. We denote by
$$ X_{s}\coloneqq X_S|_{s}, ~ X'_{s}\coloneqq X'_R|_{s} \text{ and } f_{s}\coloneqq f_S|_{s}.$$
If $D=\sum r_i D_{i}$ is a Weil $\R$-divisor on $X_S$ such that each $D_{i}$ is a prime divisor and flat over $S$, then we denote $D_{s}\coloneqq \sum r_i D_i|_{s}$ as a Weil $\R$-divisor on $X_{s}$. Note that if $D$ is $\Q$-Cartier, then $D_{s}$ is also $\Q$-Cartier.
\end{notation}

We next recall a property of log discrepancy which seems well known to experts.

\begin{proposition}[{cf. \cite[Proposition 2.5]{Kim25}}] \label{disc 1}
Let $(X,\Delta)$ be a pair, $f:X\to S$ a fibration between normal varieties, $X'\to X$ a proper birational morphism, and $E$ a prime divisor on $X'$ which is flat over $S$. Suppose $\eta\in S$ is the generic point. Then after shrinking $S$, we obtain
$$ A_{X_{\overline{\eta}},\Delta_{\overline{\eta}}}(E_{\overline{\eta}})=A_{X_s,\Delta_s}(E_s)$$
for a closed point $s\in S$.
\end{proposition}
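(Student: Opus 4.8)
The plan is to reduce the statement to comparing a single $\Q$-divisor on the total space $X'$ with its restrictions to the two fibers, and to arrange, by shrinking $S$, that both restrictions read off the same coefficient along the relevant prime divisor. Write $\pi\colon X'\to X$ for the given proper birational morphism. First I would shrink $S$ so that $S$ is smooth, so that $f$ and $f\circ\pi$ are flat (generic flatness), and so that the closed fibers $X_s,X'_s$ and the geometric generic fibers $X_{\overline{\eta}},X'_{\overline{\eta}}$ are normal varieties; in characteristic $0$, normality of the generic fiber passes to the geometric generic fiber and to a general closed fiber. I would further arrange that $\pi_s$ and $\pi_{\overline{\eta}}$ stay proper and birational. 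Finally, allowing a generically finite base change (permitted by our convention on shrinking), I would make $E$ dominate $S$ with \emph{geometrically integral} generic fiber, so that $E_s$ and $E_{\overline{\eta}}$ are prime divisors on $X'_s$ and $X'_{\overline{\eta}}$, and likewise make every component of $\Delta$ and of the exceptional locus of $\pi$ flat over $S$ with pairwise distinct prime fiberwise restrictions.

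Fixing compatible canonical divisors $K_{X'},K_X,K_S$ and using $(f\pi)^*K_S=\pi^*f^*K_S$, I would rewrite the quantity defining the log discrepancy in relative form,
$$ \Gamma\coloneqq K_{X'}-\pi^*(K_X+\Delta)=K_{X'/S}-\pi^*(K_{X/S}+\Delta), $$
an equality of $\Q$-divisors on $X'$; if $a$ denotes the coefficient of $E$ in $\Gamma$, then $A_{X,\Delta}(E)=a+1$. The gain from the relative form is that each term commutes with restriction to fibers. Over the locus where $f\circ\pi$ is smooth, which after shrinking contains all codimension-one points of each fiber (the fibers being normal, hence $R_1$), the relative dualizing sheaves $\omega_{X'/S}$ and $\omega_{X/S}$ are invertible and compatible with base change, so that in codimension one on each fiber $K_{X'/S}|_{X'_t}=K_{X'_t}$ and $K_{X/S}|_{X_t}=K_{X_t}$ for $t\in\{s,\overline{\eta}\}$; and since $K_{X/S}+\Delta$ is $\Q$-Cartier, its pullback along $\pi$ commutes with base change, giving $\bigl(\pi^*(K_{X/S}+\Delta)\bigr)|_{X'_t}=\pi_t^*(K_{X_t}+\Delta_t)$. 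Combining these yields
$$ \Gamma|_{X'_t}=K_{X'_t}-\pi_t^*(K_{X_t}+\Delta_t),\qquad t\in\{s,\overline{\eta}\}. $$

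It then remains to check that the coefficient of $E_t$ in $\Gamma|_{X'_t}$ equals $a$. Since $E$ is flat over $S$ with prime fiberwise restriction, it restricts to $E_t$ with multiplicity one; since every other component of $\Gamma$ is, after shrinking, flat over $S$ with fiberwise restriction distinct from $E_t$ (these components neither contain $E$ nor specialize onto it over a dense open of $S$), none of them contributes to the coefficient along $E_t$. Hence this coefficient is exactly $a$ for $t=\overline{\eta}$ — the base-field extension $k(\eta)\hookrightarrow\overline{k(\eta)}$ leaving coefficients unchanged — and for every $s$ in a dense open of $S$. Therefore
$$ A_{X_{\overline{\eta}},\Delta_{\overline{\eta}}}(E_{\overline{\eta}})=a+1=A_{X_s,\Delta_s}(E_s) $$
for such $s$, which is the assertion.

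The main obstacle I expect is the base-change compatibility of the relative dualizing sheaves: one must genuinely identify $K_{X'/S}|_{X'_t}$ and $K_{X/S}|_{X_t}$ with the canonical divisors of the fibers. Because the log discrepancy is a codimension-one (divisorial) invariant, it suffices to control this in relative codimension one, where flatness together with normal (hence $R_1$, $S_2$, thus generically Gorenstein) fibers makes the relative dualizing sheaf invertible and its formation compatible with restriction. The second delicate point is the spreading-out: upgrading the equalities valid over $\overline{\eta}$ to a dense open of $S$, and performing the finite base change that makes $E_{\overline{\eta}}$ irreducible, so that the coefficient computation transfers verbatim from the geometric generic fiber to a general closed fiber.
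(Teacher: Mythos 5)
Your argument is correct: it is the standard spreading-out proof, writing the discrepancy divisor $K_{X'}-\pi^*(K_X+\Delta)$ in relative form on the total space and checking that each term, and the coefficient along $E$, commutes with restriction to the geometric generic fiber and to general closed fibers after shrinking $S$ (including the generically finite base change making $E_{\overline{\eta}}$ prime, which the paper's convention on shrinking explicitly permits). The paper itself gives no proof of this proposition, citing \cite[Proposition 2.5]{Kim25} instead, and your write-up is essentially the argument that reference supplies.
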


\subsection{Volume}

In this subsection, we recall the definition of volume of a divisor and state results on the behavior of volumes in families and on Calabi--Yau type varieties due to Jiao \cite{Jia25a,Jia25b}.

\begin{definition}
Let $X$ be a normal projective variety and $D$ a $\Q$-Cartier divisor on $X$. The \emph{volume} of $D$ is defined as
$$ \mathrm{vol}(D)\coloneqq \limsup_{m\to \infty}\frac{h^0(X,\mathcal{O}_X(\floor{mD}))}{\frac{m^{\dim X}}{(\dim X)!}}.$$
\end{definition}

It is well known that the volume of a divisor has the upper-semicontinuous property.

\begin{theorem}[{cf. \cite[Theorem 1.1]{Jia25a}}] \label{thm:Jiao1}
Let $X\to S$ be a fibration of normal varieties such that $X_{\overline{\eta}}$ is a normal projective variety over $\overline{k(\eta)}$, where $\eta\in S$ is the generic point, and let $D$ be a Cartier divisor on $X$. Then for every Zariski dense subset $S'\subseteq S$, we have $\mathrm{vol}(D_{\overline{\eta}})=\inf_{s\in S'}\mathrm{vol}(D_s).$
\end{theorem}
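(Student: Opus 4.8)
The plan is to compare the Hilbert functions of $D$ along the fibers, exploiting the semicontinuity of cohomology together with the invariance of volume under base field extension. First I would reduce to the scheme-theoretic generic fiber $X_{\eta}$. Since cohomology commutes with the flat base change $\overline{k(\eta)}/k(\eta)$, one has $h^0(X_{\overline{\eta}},\mathcal{O}(mD_{\overline{\eta}}))=h^0(X_{\eta},\mathcal{O}(mD_{\eta}))$ for every $m$, while $\dim X_{\overline{\eta}}=\dim X_{\eta}=d$ (the relative dimension of $f$); hence $\mathrm{vol}(D_{\overline{\eta}})=\mathrm{vol}(D_{\eta})$. After shrinking $S$ I may assume $f$ is flat with $X_s$ a normal projective variety of dimension $d$ and that $D$ restricts to a Cartier divisor $D_s$ on each fiber (as $D$ is integral there is no floor to handle), so that $\mathrm{vol}(D_s)=\limsup_m \tfrac{d!}{m^d}h^0(X_s,\mathcal{O}(mD_s))$; such shrinking preserves the density of $S'$.

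For the lower bound $\inf_{s\in S'}\mathrm{vol}(D_s)\ge \mathrm{vol}(D_{\overline{\eta}})$, I would use that for each fixed $m$ the function $s\mapsto h^0(X_s,\mathcal{O}(mD_s))$ is upper semicontinuous on $S$ by the semicontinuity theorem, so its value at the generic point is the minimum; thus $h^0(X_s,\mathcal{O}(mD_s))\ge h^0(X_{\eta},\mathcal{O}(mD_{\eta}))$ for every $s$ and every $m$. Passing to the $\limsup$ in $m$ gives $\mathrm{vol}(D_s)\ge \mathrm{vol}(D_{\eta})=\mathrm{vol}(D_{\overline{\eta}})$ for every $s$ in the flat locus, in particular for every $s\in S'$, which yields the inequality.

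The reverse inequality is the crux. It suffices to show that for each $\delta>0$ the locus $B_\delta:=\{\,s:\mathrm{vol}(D_s)\ge \mathrm{vol}(D_{\overline{\eta}})+\delta\,\}$ is contained in a proper closed subset of $S$: as $S'$ is Zariski dense it then meets the complement of $B_\delta$, producing $s\in S'$ with $\mathrm{vol}(D_s)<\mathrm{vol}(D_{\overline{\eta}})+\delta$, and letting $\delta\to 0$ gives $\inf_{s\in S'}\mathrm{vol}(D_s)\le \mathrm{vol}(D_{\overline{\eta}})$. Equivalently, I must prove that the volume function $s\mapsto \mathrm{vol}(D_s)$ is upper semicontinuous on $S$, so that, its generic value being minimal, it is constant equal to $\mathrm{vol}(D_{\overline{\eta}})$ on a dense open set. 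The difficulty is that although each $\tfrac{d!}{m^d}h^0(X_s,\mathcal{O}(mD_s))$ is upper semicontinuous, a pointwise limit of upper semicontinuous functions need not be; and one cannot simply intersect the dense open loci $\{s:h^0(X_s,\mathcal{O}(mD_s))=h^0(X_{\eta},\mathcal{O}(mD_{\eta}))\}$ over all $m$, since this merely countable intersection may avoid the arbitrary dense subset $S'$ entirely.

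To overcome this, the plan is to make the convergence uniform across the family. Fixing a relatively very ample divisor $H$ on $X/S$, flatness renders the Hilbert polynomials $m\mapsto \chi(X_s,\mathcal{O}(mD_s+nH_s))$ and the attendant Serre-vanishing thresholds independent of $s$ on a dense open $U$; from this one aims to extract a single constant $C$, uniform in $s\in U$, with $\tfrac{d!}{m^d}h^0(X_s,\mathcal{O}(mD_s))\le \mathrm{vol}(D_s)+C/m$ together with a matching lower estimate, so that the level-wise upper semicontinuity of $h^0$ descends to genuine upper semicontinuity of the limit. The lower direction can be reinforced by Fujita approximation on $X_{\eta}$: choosing a birational model with $\mu^*D_{\eta}=P_{\eta}+N_{\eta}$, $P_{\eta}$ ample and $\mathrm{vol}(P_{\eta})\ge\mathrm{vol}(D_{\eta})-\varepsilon$, and spreading $\mu,P,N$ out via the models machinery of Section \ref{Sect:2}, the intersection number $P_s^{d}=P_{\eta}^{d}$ stays constant in the flat family. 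I expect this uniform-estimate step—upgrading level-wise semicontinuity of $h^0$ to upper semicontinuity of the volume—to be the main obstacle, and it is the essential content borrowed from Jiao's work; once granted, the two inequalities combine to give $\mathrm{vol}(D_{\overline{\eta}})=\inf_{s\in S'}\mathrm{vol}(D_s)$.
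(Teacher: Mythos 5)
The paper itself does not prove this statement: it is quoted from \cite{Jia25a} and used as a black box in the proof of Theorem \ref{thm:pklt}, so your proposal is effectively being compared with Jiao's external argument. Within your write-up, the reduction to the scheme-theoretic generic fiber, the lower bound $\mathrm{vol}(D_s)\ge\mathrm{vol}(D_{\overline{\eta}})$ via level-wise upper semicontinuity of $h^0$, and the diagnosis of why the naive argument for the reverse inequality fails (a countable intersection of dense opens need not meet an arbitrary Zariski dense subset) are all correct and well framed.

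The problem is that the decisive step is named but not carried out. To conclude $\mathrm{vol}(D_s)\le\mathrm{vol}(D_{\overline{\eta}})+\delta$ for all $s$ in a single dense open $U_m$, you must convert the equality $h^0(X_s,\mathcal{O}(mD_s))=h^0(X_\eta,\mathcal{O}(mD_\eta))$ at one well-chosen level $m$ into a bound on the full limit $\mathrm{vol}(D_s)$; this requires a lower estimate $h^0(X_s,\mathcal{O}(mD_s))\ge\frac{\mathrm{vol}(D_s)}{d!}m^d-Cm^{d-1}$ with $C$ independent of $s$, i.e.\ uniform convergence of $\frac{d!}{m^d}h^0$ to the volume across the family. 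Your proposed sources for this uniformity do not deliver it: constancy of the Hilbert polynomials $\chi(X_s,\mathcal{O}(mD_s+nH_s))$ and of Serre-vanishing thresholds concerns twists $mD+nH$ with $n$ large relative to $m$ and says nothing about $h^0(mD_s)$ itself, while the Fujita-approximation step, as you note, only reinforces the \emph{lower} bound on $\mathrm{vol}(D_s)$ --- the direction already settled by semicontinuity --- and does not touch the needed upper bound. This uniform estimate is precisely the nontrivial content of Jiao's theorem, so what you have is a correct proof of the easy inequality plus an accurate description of what remains to be proved, rather than a proof. A secondary slip: after shrinking $S$ to the flat locus, the claim that the lower bound holds ``in particular for every $s\in S'$'' does not follow, since $S'$ need not be contained in that locus; the inequality $\mathrm{vol}(D_s)\ge\mathrm{vol}(D_{\overline{\eta}})$ must be addressed (or the statement interpreted) for the bad fibers as well.
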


Moreover, Jiao proved a discreteness property of volumes on $\varepsilon$-lc log Calabi--Yau varieties.

\begin{theorem}[{cf. \cite[Theorem 1.1]{Jia25b}}] \label{thm:Jiao2}
Let $\varepsilon>0$ be a positive number and $d$ a positive integer. Then there exists a discrete set $\mathcal{C}\subseteq \R_{\ge 0}$, depending only on $\varepsilon$ and $d$ with the following property: If $(X,\Delta)$ is a $d$-dimensional $\varepsilon$-lc log Calabi--Yau pair, then for every Cartier divisor $D$ on $X$, we have $\mathrm{vol}(D)\in \mathcal{C}$.
\end{theorem}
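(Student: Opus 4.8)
The plan is to realize the required discrete set as the set of all top self-intersections of ample polarizations on $\varepsilon$-lc log Calabi--Yau pairs, and to prove its discreteness by invoking boundedness of \emph{polarized} Calabi--Yau pairs. Concretely, I would define
$$
\mathcal{C}\coloneqq\{0\}\cup\bigl\{\,H^d \;:\; (Z,\Delta_Z)\text{ is a }d\text{-dimensional }\varepsilon\text{-lc log CY pair},\ H\text{ an ample }\Q\text{-Cartier divisor on }Z\,\bigr\},
$$
which visibly depends only on $\varepsilon$ and $d$, and reduce the theorem to two claims: (i) $\mathrm{vol}(D)\in\mathcal{C}$ for every Cartier divisor $D$ on such a pair, and (ii) $\mathcal{C}$ is discrete, meaning $\mathcal{C}\cap[0,v]$ is finite for each $v>0$.

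For (i), fix a Cartier divisor $D$ on a $d$-dimensional $\varepsilon$-lc log CY pair $(X,\Delta)$. If $D$ is not big then $\mathrm{vol}(D)=0\in\mathcal{C}$, so assume $D$ big. Since $(X,\Delta)$ is $\varepsilon$-lc it is klt, and since $K_X+\Delta\sim_{\Q}0$ it is of klt log Calabi--Yau type (take the auxiliary divisor to be $0$); hence the $D$-MMP with scaling of an ample divisor runs and terminates at a minimal model $Y$ on which $D_Y$ is semiample, with associated ample model $g\colon Y\to Z$ and an ample $\Q$-Cartier divisor $H$ on $Z$ satisfying $g^*H=D_Y$. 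First I would note that, because $K_X+\Delta\sim_{\Q}0$, every step of this MMP is $(K_X+\Delta)$-trivial, hence crepant; comparing discrepancies on a common resolution then shows that the pushforward pair $(Z,\Delta_Z)$ is again $\varepsilon$-lc with $K_Z+\Delta_Z\sim_{\Q}0$, i.e. a $d$-dimensional $\varepsilon$-lc log CY pair. As $D$ is big, $X\dashrightarrow Z$ is birational and $\dim Z=d$, so $\mathrm{vol}(D)=\mathrm{vol}(D_Y)=\mathrm{vol}_Z(H)=H^d\in\mathcal{C}$.

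For (ii), fix $v>0$. Every nonzero element of $\mathcal{C}\cap[0,v]$ is of the form $H^d$ for a polarized $\varepsilon$-lc log CY pair $(Z,\Delta_Z,H)$ with $H^d\le v$. The key input is the boundedness of polarized $\varepsilon$-lc log Calabi--Yau pairs of bounded volume: such triples are parametrized, up to isomorphism, by the fibers of a flat projective family $\mathcal{Z}\to T$ over a finite-type base $T$ carrying a relatively ample $\mathcal{H}$, with each $(Z,\Delta_Z,H)$ isomorphic to some $(\mathcal{Z}_t,\Delta_{\mathcal{Z}_t},\mathcal{H}_t)$. Since intersection numbers are locally constant in flat proper families and $T$ has finitely many connected components, $\mathcal{H}_t^{\,d}$ takes only finitely many values; hence $\{H^d : H^d\le v\}$ is finite, so $\mathcal{C}\cap[0,v]$ is finite. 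As $v$ was arbitrary, $\mathcal{C}$ is discrete.

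The main obstacle is the boundedness input in (ii). Unlike $\varepsilon$-lc Fano varieties, $\varepsilon$-lc log Calabi--Yau pairs of dimension $d$ are \emph{not} bounded, so one cannot simply parametrize the varieties themselves; one genuinely needs boundedness of the \emph{polarized} pairs of bounded degree, which relies on deep boundedness results of BAB type. A secondary technical point, which I would treat carefully, is verifying that the ample model of $D$ inherits the $\varepsilon$-lc log CY structure: this requires running the $D$-MMP with $K_X+\Delta$ carried along as a numerically trivial divisor so that every step is crepant, and confirming that the resulting polarization satisfies $H^d=\mathrm{vol}(D)$.
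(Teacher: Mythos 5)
This statement is not proved in the paper at all: it is imported verbatim from \cite[Theorem 1.1]{Jia25b}, so your argument can only be compared with the actual proof of that cited result, not with anything in the present text. Judged on its own terms, your proposal has a genuine gap, and it is concentrated exactly where you place the weight: in claim (ii). The set $\mathcal{C}$ you define is \emph{not} discrete. Because you allow $H$ to be an arbitrary ample $\Q$-Cartier divisor, $\mathcal{C}$ is stable under the rescalings $H\mapsto tH$ for every $t\in\Q_{>0}$; hence, fixing any single $d$-dimensional $\varepsilon$-lc log Calabi--Yau pair $(Z,\Delta_Z)$ (say an abelian $d$-fold with $\Delta_Z=0$) and any single ample divisor $A$ on it, $\mathcal{C}$ already contains $\{t^dA^d : t\in\Q_{>0}\}$, which is dense in $\R_{\ge 0}$. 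For the same reason, the boundedness input you invoke in (ii) is inconsistent with your own (correct) remark that $\varepsilon$-lc log CY pairs are unbounded: every such pair is ``polarized'' by a small rational multiple of any ample divisor, with volume $\le v$, so boundedness of pairs polarized by ample $\Q$-Cartier divisors of volume $\le v$ would force boundedness of all $\varepsilon$-lc log CY pairs of dimension $d$, which is false.

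What rescues the strategy is a piece of structure your write-up discards: since $D$ is \emph{Cartier}, its birational transform along the MMP is an \emph{integral} Weil divisor, so the polarization produced in your step (i) is $H=g_*D_Y$, an ample, $\Q$-Cartier, \emph{integral} Weil divisor with $H^d=\mathrm{vol}(D)$. The set $\mathcal{C}$ must therefore be defined with $H$ integral; this is precisely the setting of Birkar's boundedness theorem for polarised $\varepsilon$-lc Calabi--Yau pairs (\emph{Geometry of polarised varieties}), which is the real engine behind the cited theorem of Jiao. Even after this correction, your finiteness argument needs one more nontrivial ingredient: the flips destroy Cartier-ness, so $H$ is in general only $\Q$-Cartier, and ``intersection numbers are locally constant in flat families'' applies to $H$ only once one knows that the Cartier index of integral $\Q$-Cartier ample Weil divisors is uniformly bounded over the members of the bounded family (a separate result about klt varieties in bounded families); one then runs your family argument for $IH$ with $I$ the uniform index. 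Your step (i) --- crepantness of the $D$-MMP over $K_X+\Delta\sim_{\Q}0$, preservation of $\varepsilon$-lc log CY structure on the ample model, and $\mathrm{vol}(D)=H^d$ --- is correct; the gap is the definition of $\mathcal{C}$ and the unsubstantiated (indeed false, as stated) discreteness and boundedness claims in (ii).
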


\section{Proofs of main results}\label{Sect:3}

\begin{proof}[Proof of Theorem \ref{thm:pklt}]
Let $f:X'\to X_{\overline{\eta}}$ be a proper birational morphism with $E$ a prime divisor on $X'$. Then after shrinking $S$, there exist models $f_S:X'_S\to X$ of $f$ and $E_S$ of $E$ over $S$.

\smallskip

Let $\varepsilon'>0$ be a positive number, and let $A$ be an ample$/S$ Cartier divisor on $X$. Choose an ample$/S$ Cartier divisor $A'$ on $X$ such that $-(K_{X}+\Delta)+\varepsilon' A\le A'$. Then for every $s\in S$, we have $\mathrm{vol}(-(K_{X_s}+\Delta_s)+\varepsilon' A_s)\le \mathrm{vol}(A'_s)=\mathrm{vol}(A'_{\overline{\eta}}).$ Therefore, the set $\left\{\mathrm{vol}(-(K_{X_s}+\Delta_s)+\varepsilon' A_s)\right\}_{s\in S}\subseteq \R_{\ge 0}$ is bounded. Hence, by Theorem \ref{thm:Jiao2}, there exists $S'_{\varepsilon'}\subseteq S'$ that is Zariski dense in $S$ such that $\mathrm{vol}(-(K_{X_s}+\Delta_s)+\varepsilon' A_s)$ is constant for every $s\in S'_{\varepsilon'}$. Moreover, by Theorem \ref{thm:Jiao1}, we have 
\begin{equation} \label{eq:1}
\mathrm{vol}(-(K_{X_s}+\Delta_{s})+\varepsilon' A_s)=\mathrm{vol}(-(K_{X_{\overline{\eta}}}+\Delta_{\overline{\eta}})+\varepsilon' A)\text{ for every }s\in S'_{\varepsilon'}.
\end{equation}
Let $D\coloneqq -(K_X+\Delta)+\varepsilon' A_S$ for simplicity.

\smallskip

Let $a>\sigma_{E_s}(D_s)$ be a rational number. By Lemma \ref{volume asymptotic order}, we have $$\mathrm{vol}(f^*_sD_s-aE_s)<\mathrm{vol}(f^*_sD_s).$$ For any $s\in S'_{\varepsilon}$, we then have $$
\begin{aligned} 
\mathrm{vol}(f^*D_{\overline{\eta}}-aE)&\le \mathrm{vol}(f^*_sD_s-aE_{s}) & (1)
\\ &<\mathrm{vol}(f^*_sD_s) &
\\ &=\mathrm{vol}(D_s) 
\\ &=\mathrm{vol}(D_{\overline{\eta}}) & (2)
\\ &=\mathrm{vol}(f^*D_{\overline{\eta}}),
\end{aligned}$$ 
where we used the upper semicontinuity of cohomology (cf. \cite[Theorem 12.8]{Har77}) in (1), and (\ref{eq:1}) in (2). Therefore, by Lemma \ref{volume asymptotic order} again, we obtain $a>\sigma_E(D)$. Thus,
\begin{equation}\label{eq:2}
\sigma_{E_s}(D_s)\ge \sigma_{E}(D_{\overline{\eta}})\text{ for every }s\in S'_{\varepsilon}.
\end{equation}

\smallskip

By Proposition \ref{disc 1}, after shrinking $S$, we may assume that
\begin{equation} \label{eq:3}
A_{X_{\overline{\eta}},\Delta_{\overline{\eta}}}(E)=A_{X_s,\Delta_s}(E_s)    
\end{equation}
for every $s\in S$. By the assumption that the fiber $(X_s,\Delta_s)$ is of $\varepsilon$-lc log Calabi--Yau type, there exists an effective $\Q$-Weil divisor $\Delta'$ on $X_s$ such that $(X_s,\Delta_s+\Delta')$ is $\varepsilon$-lc and $K_{X_s}+\Delta_s+\Delta'\sim_{\Q}0$.
$$ 
\begin{aligned}
A_{X_{\overline{\eta}},\Delta_{\overline{\eta}}}(E)-\sigma_{E}(D_{\overline{\eta}})&\ge A_{X_s,\Delta_s}(E_s)-\sigma_{E_s}(D_s) &(1)
\\ &= A_{X_s,\Delta_s}(E_s)-\mathrm{ord}_{E_s}(\|-(K_{X_s}+\Delta_s)\|)
\\ &\ge A_{X_s,\Delta_s}(E_s)-\mathrm{ord}_{E_s}\Delta'
\\ &=A_{X_s,\Delta_s+\Delta'}(E_s)>\varepsilon&
\end{aligned}$$
for every $s\in S'_{\varepsilon}$, where (1) is due to (\ref{eq:2}) and (\ref{eq:3}). Hence,
$$ A_{X_{\overline{\eta}},\Delta_{\overline{\eta}}}(E)-\sigma_{E}(-(K_{X_{\overline{\eta}}}+\Delta_{\overline{\eta}}))\ge \varepsilon.$$
Hence, $(X_{\overline{\eta}},\Delta_{\overline{\eta}})$ is pklt, and we complete the proof.
\end{proof}

\begin{remark}
    We emphasize that the $\varepsilon$-lc condition on the fibers is crucial. If we replace this condition with klt or lc, then we do not know whether the volumes of Cartier divisors on a log Calabi--Yau type variety are contained in a discrete set or not. This discreteness is a key ingredient in our proof to ensure that the volume is constant over a Zariski dense subset.
\end{remark}

\begin{proof}[Proof of Corollary \ref{cor:MMP}]
This follows directly from Theorems \ref{thm:pklt} and \ref{thm:pklt MMP}.
\end{proof}

\begin{proof}[Proof of Theorem \ref{thm:absolute}]
The argument follows the strategy of \cite[Step 4 in the proof of Theorem 1.2]{CLZ25}. By Conjecture \ref{conj2}, there exists a positive integer $m$ that is independent of $s\in S'$ such that $m(K_{X}+\Delta)\sim 0$. By \cite[Chapter III, Theorem 12.8 and Corollary 12.9]{Har77}, after replacing $S$ by a finite base change, we obtain a surjection
$$ f_*\mathcal{O}_X(-mK_X)\to H^0(X_s,\mathcal{O}_{X_s}(-mK_{X_s})).$$
Thus, for every $\Delta\in |-mK_{X_s}|$, there exists a divisor $\Delta'\in |-mK_X|$ such that $K_X+\frac{1}{m}\Delta'\sim_{\Q,S}0.$ Moreover, the generic fiber $\left(X_{\overline{\eta}},\frac{1}{m}\Delta'_{\overline{\eta}}\right)$ is klt by \cite[Theorem 9.5.19]{Laz04b}. Since $X_s$ is klt log Calabi--Yau type, we conclude the assertion.
\end{proof}

\bibliographystyle{habbvr}
\bibliography{biblio}

\end{document}